%%%%%%%%%%%%%%%%%%%%%%%%%%%%%%%%%%%%%%%%%%%%%%%%%%%%%%%%%%%%%%%%%%%%%%%%%%%%%
%%                    This is the LaTeX2e file for                         %%
%%                                                                         %%
%%            Rohlin's Theorem                %%
%%                                                                         %%
%%                                 by                                      %%
%%     Peter Lambert-Cole,          %%
%%                           August 2017                               %%
%%%%%%%%%%%%%%%%%%%%%%%%%%%%%%%%%%%%%%%%%%%%%%%%%%%%%%%%%%%%%%%%%%%%%%%%%%%%%

\documentclass[10pt]{amsart}
% \font\emailfont=cmtt10
 \setlength{\parskip}{3pt}
\usepackage{amsmath,amsthm,amsfonts,amssymb,amscd,flafter,pinlabel}
\usepackage{mathtools}
\usepackage[mathscr]{eucal}
\usepackage{graphics}
 \usepackage[all]{xy}
\usepackage{caption, subcaption}
\usepackage[usenames,dvipsnames]{color}
\usepackage{graphicx}

\usepackage{pgf}
\usepackage{tikz}
\usetikzlibrary{arrows,automata}
\usepackage[latin1]{inputenc}

\usepackage[colorlinks=true, urlcolor=NavyBlue, linkcolor=NavyBlue, citecolor=NavyBlue, pdftitle={Lecture notes on trisections and chomology}, pdfauthor={Peter Lambert-Cole}, pdfsubject={trisections}, pdfkeywords={4-manifolds}]{hyperref}

%%%%%%%%%%%%%%%%%%%%%%%%%%%%%%%%%%%%%%%%%%%%%%%%%%%%%%%
%       Margins
%%%%%%%%%%%%%%%%%%%%%%%%%%%%%%%%%%%%%%%%%%%%%%%%%%%%%%%

\headheight=7pt         \topmargin=14pt
\textheight=574pt       \textwidth=445pt
\oddsidemargin=18pt     \evensidemargin=18pt

%%%%%%%%%%%%%%%%%%%%%%%%%%%%%%%%%%%%%%%%%%%%%%%%%%%%%%%
%       Theorem environments
%%%%%%%%%%%%%%%%%%%%%%%%%%%%%%%%%%%%%%%%%%%%%%%%%%%%%%%

%% \theoremstyle{plain} %% This is the default

\newtheorem{theorem}{Theorem}[section]
\newtheorem{corollary}[theorem]{Corollary}

\newtheorem{question}[theorem]{Question}
\newtheorem{proposition}[theorem]{Proposition}

\newtheorem{lemma}[theorem]{Lemma}

\theoremstyle{definition}
\newtheorem{definition}[theorem]{Definition}

\theoremstyle{definition}

%%%%%%%%%%%%%%%%%%%%%%%%%%%%%%%%%%%%%%%%%%%%%%%%%%%%%%%
%       Notation Shortcuts
%%%%%%%%%%%%%%%%%%%%%%%%%%%%%%%%%%%%%%%%%%%%%%%%%%%%%%%

\newcommand{\Z}{\ensuremath{\mathbb{Z}}}

\newcommand\alphas{\mbox{\boldmath$\alpha$}}
\newcommand\betas{\mbox{\boldmath$\beta$}}
\newcommand\gammas{\mbox{\boldmath$\gamma$}}

\def\s{\mathfrak{s}}

\newcommand{\cM}{\mathcal{M}}

\newcommand{\CC}{\mathbb{C}}
\newcommand{\RR}{\mathbb{R}}
\newcommand{\del}{\partial}

\newcommand{\ZZ}{\mathbb{Z}}

\newcommand{\cF}{\mathcal{F}}
\newcommand{\cT}{\mathcal{T}}

\newcommand{\cC}{\mathcal{C}}
\newcommand{\cK}{\mathcal{K}}

\newcommand{\cZ}{\mathcal{Z}}

\newcommand{\CP}{\mathbb{CP}}

\newcommand{\Hch}{\check{H}}
\newcommand{\Cch}{\check{C}}
\newcommand{\cH}{\mathcal{H}}
\newcommand{\cDR}{\mathcal{DR}}

\newcommand{\SpinC}{\text{Spin}^{\CC}}

%%%%%%%%%%%%%%%%%%%%%%%%%%%%%%%%%%%%%%%%%%%%%%%%%%%%%%%
%       End Header
%%%%%%%%%%%%%%%%%%%%%%%%%%%%%%%%%%%%%%%%%%%%%%%%%%%%%%%

\begin{document}

\title[{Lecture notes on trisections and cohomology}]{Lecture notes on trisections and cohomology}

\author[P. Lambert-Cole]{Peter Lambert-Cole}
\address{Department of Mathematics \\ University of Georgia}
\email{plc@uga.edu}

% \date{\today}
\keywords{4-manifolds}
\subjclass[2010]{57M27; 57R58}
\maketitle

%%%%%%%%%%%%%%%%%%%%%%%%%%%%%%%%%%%%%%%%%%%%%%%%%%%%%%%

%%%%%%%%%%%%%%%%%%%%%%%%%%%%%%%%%%%%%%%%%%%%%%%%%%%%%%%
\begin{abstract}
%%%%%%%%%%%%%%%%%%%%%%%%%%%%%%%%%%%%%%%%%%%%%%%%%%%%%%%

These notes are from the first half of a seminar on symplectic trisections at the Max Planck Institute for Mathematics in Spring 2020.

%%%%%%%%%%%%%%%%%%%%%%%%%%%%%%%%%%%%%%%%%%%%%%%%%%%%%%%	
\end{abstract}
%%%%%%%%%%%%%%%%%%%%%%%%%%%%%%%%%%%%%%%%%%%%%%%%%%%%%%%

\section{Introduction}

A motivating question in 4-manifold topology is

\begin{question}
To what extent are general 4-manifolds similar to projective complex surfaces?
\end{question}

Donaldson showed that, like projective surfaces, every closed symplectic manifold admits a Lefschetz pencil \cite{Donaldson}.  Later, Auroux, Donaldson and Katzarkov showed that near-symplectic manifolds admit so-called broken Lefschetz pencils\footnote{The term `singular Lefschetz pencil' was used in \cite{ADK}}\cite{ADK}.  Baykur then proved that every closed, oriented smooth 4-manifold admits a broken Lefschetz fibration over $S^2$ \cite{Baykur}.  This gives one sense in which all such 4-manifolds are similar to projective surfaces.

It is a classical fact, known as Theorem B, that over a Stein domain, coherent sheaves have no higher cohomology.  That is, if $Z$ is Stein and $\cF$ is a coherent sheaf, then $H^i(Z;\cF) = 0$ for $i > 0$.  A consequence is that if $X$ is a complex manifold, $\cF$ is a coherent sheaf, and $\cZ = \{Z_i\}$ is an open cover of $X$ by Stein domains, then the sheaf cohomology of $\cF$ can be computed by the Cech complex with respect to the open cover $\cZ$:
\[H^*(X;\cF) \cong \Hch^*(\cZ;\cF).\]
On a projective surface, Hodge theory implies that Dolbeault cohomology refines de Rham cohomology.  Specifically, there is an isomorphism
\[H^k(X;\CC) \cong \bigoplus_{i + j = k} H^{i,j}_{\overline{\partial}}(X;\CC)\]
In addition, Dolbeault's Theorem states that Dolbeault cohomology is isomorphic to the cohomology of the sheaf of holomorphic differential forms:
\[H^{i,j}_{\overline{\partial}}(X;\CC) \cong H^i(X;\Omega^j).\]
Moreover, applying Serre duality to the constant sheaf $\underline{\CC}$ shows that there is an isomorphism
\[H^{i,j}_{\overline{\partial}}(X;\CC) \cong H^{n-i,n-j}_{\overline{\partial}}(X;\CC)\]
where $n$ is the complex dimension of $X$.

Interestingly, trisections of 4-manifolds reveal similar results for singular and de Rham cohomology. The four-dimensional handlebody $\natural_k S^1 \times B^3$ admits a Stein structure.  Thus, since every closed 4-manifold admits a trisection, it can be covered by three domains that admit Stein structures.  In addition, by slightly enlarging the sectors of trisection, we get an open cover $\cT = \{U_1,U_2,U_3\}$, where
\begin{enumerate}
\item $U_i$ is diffeomorphic to $\natural_{k_i} S^1 \times B^3$,
\item $U_i \cap U_j$ is diffeomorphic to $\natural_{g} S^1 \times B^3$, and
\item $U_1 \cap U_2 \cap U_3$ is diffeomorphic to $\Sigma_g \times D^2$.
\end{enumerate}
Let $\cC^i$ denote the presheaf on $X$ defined as
\[\cC^i(U) \coloneqq H^i(U;\ZZ)\]
It is clear that $\cC^i$ is a presheaf.  However, in general it is not a sheaf as it satisfies the gluing axiom but not the locality axiom.  In particular, it is not separated.  Nonetheless, we can compute the Cech cohomology $\Hch^*(\cT,\cC^i)$ of the presheaf $\cC^i$ with respect to the open cover $\cT$.  

Methods to compute the homology of 4-manifolds from a trisection have been given by Feller, Klug, Schirmer and Zemke \cite{FKSZ} and by Florens and Moussard \cite{FM}.  Reinterpreting their results, we get the following theorems:

\begin{theorem}[Hodge/Dolbeault Theorem]
\label{thrm:trisection-Hodge}
There is an isomorphism
\[H^{k}(X;\ZZ) \cong \bigoplus_{i + j  = k} \Hch^i(\cT,\cC^j)\]
Moreover, we have the following `Hodge Diamond' for the cohomology of a trisected 4-manifold
\[
\begin{array}{c c c c c}
& & H^4(X;\ZZ) & & \\
\\
& 0 & & H^3(X;\ZZ) & \\
\\
0 & & H^2(X;\ZZ) & & 0 \\
\\
& H^1(X;\ZZ) & & 0 & \\
\\
& & H^0(X;\ZZ) & &
\end{array}\]
\end{theorem}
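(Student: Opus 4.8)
The plan is to run the Mayer--Vietoris spectral sequence of the open cover $\cT = \{U_1,U_2,U_3\}$. For the constant sheaf $\underline{\ZZ}$ on $X$ this takes the form
\[E_1^{p,q} = \bigoplus_{|I| = p+1} H^q(U_I;\ZZ) \ \Longrightarrow\ H^{p+q}(X;\ZZ),\]
where $U_I = \bigcap_{i\in I} U_i$, with $d_1$ the Cech differential, so that $E_2^{p,q} = \Hch^p(\cT,\cC^q)$ and the abutment is ordinary singular cohomology since $X$ is a manifold. One obtains it either as the Cech-to-derived-functor spectral sequence (resolve $\underline{\ZZ}$ by the Godement complex and take cohomology of the associated Cech double complex in the two orders) or, by hand, by iterating the usual Mayer--Vietoris sequences for $X = (U_1\cup U_2)\cup U_3$. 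Since $\cT$ has only three members, the Cech complex has length three, so $E_2^{p,q} = 0$ unless $p \in \{0,1,2\}$.

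Next I would feed in the given homotopy types. Each $U_i \simeq \natural_{k_i}S^1\times B^3$ and each $U_i\cap U_j \simeq \natural_g S^1\times B^3$ is homotopy equivalent to a wedge of circles, hence has cohomology only in degrees $0$ and $1$; and $U_1\cap U_2\cap U_3 \cong \Sigma_g\times D^2$ deformation retracts onto $\Sigma_g$, with cohomology $\ZZ$, $\ZZ^{2g}$, $\ZZ$ in degrees $0,1,2$ and nothing above. Thus the $E_1$ page occupies the three columns $p=0,1,2$ and the three rows $q=0,1,2$, and the row $q=2$ consists of the single group $H^2(\Sigma_g;\ZZ)=\ZZ$ in column $p=2$. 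The row $q=0$ is the simplicial cochain complex of the nerve of $\cT$, which is a $2$-simplex and hence contractible, so $E_2^{0,0}=\ZZ$ and $E_2^{1,0}=E_2^{2,0}=0$. The row $q=2$ has no incoming or outgoing $d_1$, so $E_2^{2,2}=\ZZ$ and $E_2^{p,2}=0$ for $p\ne 2$. The row $q=1$ is the length-three complex $\bigoplus_i H^1(U_i)\to\bigoplus_{i<j}H^1(U_i\cap U_j)\to H^1(\Sigma_g)$, and its cohomology groups are by definition $\Hch^0(\cT,\cC^1)$, $\Hch^1(\cT,\cC^1)$, $\Hch^2(\cT,\cC^1)$; no further computation of them is needed.

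Now the spectral sequence collapses at $E_2$. A differential $d_r$ with $r\ge 2$ has bidegree $(r,1-r)$; checking the support computed above, the only one with both source and target in a potentially nonzero spot would be $d_2\colon E_2^{0,1}\to E_2^{2,0}$, and this vanishes because $E_2^{2,0}=0$ (every other $d_r$ has a zero source or target). Hence $E_\infty = E_2$. Reading off anti-diagonals, for each $k$ there is at most one nonzero $E_\infty^{p,q}$ with $p+q=k$: namely $E_\infty^{0,0}$, $E_\infty^{0,1}$, $E_\infty^{1,1}$, $E_\infty^{2,1}$, $E_\infty^{2,2}$ for $k=0,1,2,3,4$. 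So the filtration on each $H^k(X;\ZZ)$ has a single nonzero graded piece, there is no extension problem, and $H^k(X;\ZZ)\cong\bigoplus_{i+j=k}\Hch^i(\cT,\cC^j)$. The remaining $\Hch^i(\cT,\cC^j)$ all vanish --- the $q=0$ and $q=2$ rows kill everything except $\Hch^0(\cT,\cC^0)\cong H^0$ and $\Hch^2(\cT,\cC^2)\cong H^4$, and the $q=1$ contributions land only on the positions carrying $H^1$, $H^2$, $H^3$ --- which is exactly the displayed Hodge diamond.

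The step deserving real care, as opposed to bookkeeping with the handlebody data, is setting up this spectral sequence and confirming that its $E_2$ page is the Cech cohomology of the presheaf $\cC^q$ while its abutment is the singular cohomology of $X$; once that is in place, the contractibility of the nerve (the $q=0$ row) and the length-three shape of the cover force both the collapse and the vanishing pattern. If one wishes to bypass spectral sequences, the same facts follow by running Mayer--Vietoris for $X=(U_1\cup U_2)\cup U_3$ together with $(U_1\cup U_2)\cap U_3 = (U_1\cap U_3)\cup(U_2\cap U_3)$ and tracking which terms are zero, which is essentially the argument behind \cite{FKSZ} and \cite{FM}.
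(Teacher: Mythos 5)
Your argument is correct, but it takes a genuinely different route from the paper. You set up the Mayer--Vietoris (Cech-to-derived-functor) spectral sequence of the cover $\cT$, feed in the homotopy types of the sectors, handlebodies and central surface, and let the shape of the cover force collapse at $E_2$ with a single graded piece on each anti-diagonal; this is self-contained modulo the standard comparison of sheaf and singular cohomology for the constant sheaf on a manifold, and your treatment of the $q=0$ row (contractible nerve) and $q=2$ row reproduces what the paper proves as its last two propositions of Section 2. The paper never invokes a spectral sequence: it imports the chain-level computation of $H_*(X;\ZZ)$ from \cite{FM}, whose middle three terms are built from $H_1(\Sigma)$ and the kernels $L_\lambda$, and Proposition \ref{prop:middle-isom} gives an explicit isomorphism of that three-term complex with the Cech complex $\Cch^*(\cT,\cC^1)$ via the maps $\phi_1,\phi_2,\pi$, constructed from Mayer--Vietoris for the Heegaard splittings $Y_\lambda = H_\lambda \cup H_{\lambda+1}$, the long exact sequence of the pair $(H_\lambda,\Sigma)$, and Poincare duality; Poincare duality on $X$ then yields $H^i(X;\ZZ)\cong \Hch^{i-1}(\cT,\cC^1)$ for $i=1,2,3$. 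What your approach buys is independence from \cite{FM} and a cleaner conceptual explanation of why exactly five diamond entries survive; what the paper's approach buys is the explicit cocycle-level identification of $(1,1)$-classes with the subspaces $L_\lambda \subset H_1(\Sigma)$, which is precisely what powers the later geometric interpretations (de Rham representatives, line bundles, $\SpinC$-structures). One small caveat: your closing remark that this bookkeeping ``is essentially the argument behind'' \cite{FKSZ,FM} overstates the match, since those papers compute homology from handle/Morse-theoretic data rather than from a Cech-type spectral sequence, but nothing in your proof depends on that remark.
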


In particular, the Cech complex $\Cch^*(\cT,\cC^1)$ -- representing the middle diagonal of the Hodge diamond -- is essentially given in \cite[Section 2.1]{FM} but not described as such.

We can also interpret the symmetry of the Hodge diamond as Serre duality.

\begin{theorem}[Serre duality]
\label{thrm:trisection-Serre}
There is an isomorphism
\[ \Hch^i(\cT,\cC^j) \otimes \RR \cong \Hch^{2-i}(\cT,\cC^{2-j}) \otimes \RR\]
\end{theorem}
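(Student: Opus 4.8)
The plan is to derive Serre duality from the Hodge/Dolbeault isomorphism of \thmref{thrm:trisection-Hodge} together with ordinary Poincaré duality on $X$, exploiting the fact that each row of the Hodge diamond has at most one nonzero entry. The first step is to record the homotopy types of the pieces of $\cT$: each $U_i$ is homotopy equivalent to $\vee_{k_i} S^1$, each $U_i \cap U_j$ to $\vee_g S^1$, and $U_1 \cap U_2 \cap U_3 \simeq \Sigma_g$, and all of these spaces and their overlaps are connected. This pins down the presheaves $\cC^j$ almost completely: $\cC^0$ restricts to the constant presheaf $\underline{\ZZ}$ on $\cT$; the presheaf $\cC^2$ is supported only on the triple overlap $U_1\cap U_2\cap U_3$, since $H^2$ of a wedge of circles vanishes; and $\cC^j = 0$ on every piece once $j \geq 3$. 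We also adopt the convention $\cC^j = 0$ for $j < 0$.

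Next I would compute $\Hch^\ast(\cT,\cC^j)$ directly from these facts. Because $\cT$ has nonempty triple intersection, its nerve is the standard $2$-simplex, which is contractible; hence $\Hch^0(\cT,\cC^0) = \ZZ$ and $\Hch^i(\cT,\cC^0) = 0$ for $i \neq 0$. Since $\cC^2$ vanishes on the $U_i$ and on the $U_i\cap U_j$, the complex $\Cch^\ast(\cT,\cC^2)$ is concentrated in Čech degree $2$, so $\Hch^2(\cT,\cC^2) \cong H^2(\Sigma_g;\ZZ) \cong \ZZ$ while $\Hch^0(\cT,\cC^2) = \Hch^1(\cT,\cC^2) = 0$. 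Combined with $\cC^j = 0$ for $j\geq 3$, this shows that of the twelve groups $\Hch^i(\cT,\cC^j)$ with $0\leq i\leq 2$ and $0\leq j\leq 3$, the only ones that can be nonzero are the five with $(i,j)\in\{(0,0),(0,1),(1,1),(2,1),(2,2)\}$. Feeding the vanishing of the other seven into \thmref{thrm:trisection-Hodge} collapses each direct sum there to a single term, yielding isomorphisms $\Hch^0(\cT,\cC^0)\cong H^0(X;\ZZ)$, $\Hch^0(\cT,\cC^1)\cong H^1(X;\ZZ)$, $\Hch^1(\cT,\cC^1)\cong H^2(X;\ZZ)$, $\Hch^2(\cT,\cC^1)\cong H^3(X;\ZZ)$ and $\Hch^2(\cT,\cC^2)\cong H^4(X;\ZZ)$.

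Finally I would observe that the involution $(i,j)\mapsto(2-i,2-j)$ permutes these five surviving bidegrees, exchanging the one of total degree $k$ with the one of total degree $4-k$, and that it carries every other bidegree, including those with $j$ outside $\{0,1,2,3\}$, to one where $\Hch^i(\cT,\cC^j)$ also vanishes. For such a vanishing bidegree the asserted isomorphism is $0 \cong 0$, and for a surviving bidegree it reads $H^{i+j}(X;\RR) \cong H^{4-(i+j)}(X;\RR)$, which is Poincaré duality for the closed oriented $4$-manifold $X$. The only input beyond elementary bookkeeping is therefore Poincaré duality, and the step most in need of care is verifying the vanishing of the seven off-diagonal groups $\Hch^i(\cT,\cC^j)$; this in turn depends only on the explicit homotopy types of the sectors of the trisection and of their overlaps, so there is no serious obstacle.

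It is worth noting the conceptual picture behind this. The cup products $\cC^j\otimes\cC^{j'}\to\cC^{j+j'}$ induce a pairing $\Hch^i(\cT,\cC^j)\otimes\Hch^{2-i}(\cT,\cC^{2-j})\to\Hch^2(\cT,\cC^2)\cong H^4(X;\ZZ)\cong\ZZ$, with the last isomorphism playing the role of the Serre duality trace map. Proving this pairing perfect directly would give a more intrinsic argument, but it would amount to reproving Poincaré duality through the intersection form on $H^1(\Sigma_g;\RR)$ and the Lagrangian conditions on the images of the $H^1(U_i\cap U_j;\RR)$ there, so the reduction above is the efficient route.
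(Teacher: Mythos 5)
Your argument is correct and is essentially the paper's own: the paper treats Serre duality as nothing more than the symmetry of the Hodge diamond in \thmref{thrm:trisection-Hodge}, which rests on exactly the same ingredients you use --- the vanishing computations for $\cC^0$ and $\cC^2$ (so that each $\Hch^i(\cT,\cC^j)$ collapses to a single group $H^{i+j}(X;\ZZ)$) combined with Poincar\'e duality over $\RR$ to match total degrees $k$ and $4-k$. Your nerve-of-the-cover shortcut for $\cC^0$ is a harmless variant of the paper's direct computation of the complex $0 \to \ZZ^3 \to \ZZ^3 \to \ZZ \to 0$.
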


\subsection{$2^{\text{nd}}$ Cohomology as (1,1)-classes}

By analogy with complex geometry, we refer to any class in $\Hch^1(\cT,\cC^1) \cong H^2(X;\ZZ)$ as a {\it $(1,1)$-class}.  On a projective surface, the Lefschetz theorem states that the integral (1,1) classes are precisely those that can be represented by a divisor.  The proof of Theorem \ref{thrm:trisection-Hodge} further implies that every class of is a (1,1) class.

\begin{theorem}
\label{thrm:11}
Every class in $H^2(X;\ZZ)$ is a (1,1)-class with respect to the trisection $\cT$.  Specifically
\[H^2(X;\ZZ) \cong \Hch^1(\cT,\cC^1)\]
\end{theorem}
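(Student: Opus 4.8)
The plan is to deduce this immediately from Theorem~\ref{thrm:trisection-Hodge}. That theorem gives a splitting $H^2(X;\ZZ) \cong \Hch^0(\cT,\cC^2) \oplus \Hch^1(\cT,\cC^1) \oplus \Hch^2(\cT,\cC^0)$, so it suffices to show that the two outer summands $\Hch^0(\cT,\cC^2)$ and $\Hch^2(\cT,\cC^0)$ both vanish. (These vanishings are also visible directly in the Hodge diamond displayed in Theorem~\ref{thrm:trisection-Hodge}, but I will verify them by hand from the local models making up the trisection.)

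First I would treat $\Hch^0(\cT,\cC^2)$. This group is the kernel of the first \v Cech differential $\Cch^0(\cT,\cC^2) \to \Cch^1(\cT,\cC^2)$, and $\Cch^0(\cT,\cC^2) = \bigoplus_{i=1}^{3} H^2(U_i;\ZZ)$. Since each $U_i$ is diffeomorphic to $\natural_{k_i} S^1 \times B^3$, which is homotopy equivalent to a wedge of $k_i$ circles, we have $H^2(U_i;\ZZ) = 0$. Hence $\Cch^0(\cT,\cC^2) = 0$ and therefore $\Hch^0(\cT,\cC^2) = 0$.

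Next I would treat $\Hch^2(\cT,\cC^0)$. Because each $U_i$, each double intersection $U_i \cap U_j$, and the triple intersection $U_1 \cap U_2 \cap U_3 \cong \Sigma_g \times D^2$ is nonempty and connected, the presheaf $\cC^0$ restricts on the cover $\cT$ to the constant presheaf $\underline{\ZZ}$ with identity restriction maps; its \v Cech complex with respect to $\cT$ is then exactly the simplicial cochain complex (with $\ZZ$ coefficients) of the nerve $N(\cT)$. Since all three pairwise intersections and the triple intersection are nonempty, $N(\cT)$ is the full $2$-simplex $\Delta^2$, which is contractible, so $\Hch^i(\cT,\cC^0) = 0$ for all $i > 0$; in particular $\Hch^2(\cT,\cC^0) = 0$.

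Combining these two vanishing statements with Theorem~\ref{thrm:trisection-Hodge} yields $H^2(X;\ZZ) \cong \Hch^1(\cT,\cC^1)$, as claimed. There is no deep obstacle: the only inputs are the diffeomorphism types of the pieces of $\cT$ recorded above, together with the fact that the triple intersection of a trisection is always nonempty, which is what guarantees that $N(\cT) = \Delta^2$. The one point requiring a little care is that $\cC^0$ is merely a presheaf, not a sheaf; but on the connected opens occurring in $\cT$ it coincides with the constant presheaf, so computing its \v Cech cohomology against $\cT$ is unproblematic.
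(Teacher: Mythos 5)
Your argument is circular relative to the logical structure of this paper. The splitting you invoke from Theorem~\ref{thrm:trisection-Hodge} is not an independent input: in degree $2$ that theorem (its Hodge diamond literally places $H^2(X;\ZZ)$ in the $(1,1)$ slot) is \emph{established} by proving exactly the isomorphism $H^2(X;\ZZ) \cong \Hch^1(\cT,\cC^1)$ that you are asked to prove, and the paper states explicitly that Theorem~\ref{thrm:11} is extracted from the proof of Theorem~\ref{thrm:trisection-Hodge}, not deduced from its statement. So citing Theorem~\ref{thrm:trisection-Hodge} assumes the conclusion. The substantive content you would need to supply is the identification of $\Hch^1(\cT,\cC^1)$ with $H^2(X;\ZZ)$ itself, and this does not come from general machinery: the coefficients $\cC^j$ are only presheaves (not sheaves), the cover is not acyclic for them, and there is no \v Cech-to-derived-functor or Leray-type argument available that would make the total \v Cech cohomology compute $H^*(X)$ for free. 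The paper gets this isomorphism by a concrete chain-level argument: the Florens--Moussard complex computing $H_*(X)$ from the trisection, Proposition~\ref{prop:middle-isom} identifying its three middle terms (built from the lagrangians $L_\lambda = \ker((\iota_\lambda)_*)$ via the Mayer--Vietoris sequence for $Y_\lambda = H_\lambda \cup H_{\lambda+1}$ and the long exact sequence of $(H_\lambda,\Sigma)$) with the \v Cech complex $\Cch^*(\cT,\cC^1)$, and then Poincar\'e duality to pass from $H_2(X;\ZZ)$ to $H^2(X;\ZZ)$. None of this appears in your proposal.

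The parts you do verify are correct but are the easy half of the story: $\Hch^0(\cT,\cC^2)=0$ because $H^2(U_i;\ZZ)=0$ for a four-dimensional $1$-handlebody, and $\Hch^2(\cT,\cC^0)=0$, which the paper obtains by writing out the complex $0 \to \ZZ^3 \to \ZZ^3 \to \ZZ \to 0$ and which your nerve-of-the-cover argument reproves in a slightly slicker way (with the mild caveat that one should check the \v Cech differentials of the presheaf $\cC^0$ on this cover really are the simplicial ones, which they are since all intersections are connected). These vanishings are what the paper uses to fill in the zero entries of the Hodge diamond; they do not by themselves produce the isomorphism $H^2(X;\ZZ) \cong \Hch^1(\cT,\cC^1)$. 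To repair the proposal, replace the appeal to Theorem~\ref{thrm:trisection-Hodge} with the argument of Proposition~\ref{prop:middle-isom} together with Poincar\'e duality (or some other independent identification of $\Hch^1(\cT,\cC^1)$ with $H^2(X;\ZZ)$).
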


Unpacking the definition of Cech cohomology, this means that every element of $H^2(X)$ is represented by a triple $(\beta_1,\beta_2,\beta_3)$ where $\beta_{\lambda}$ is a 1-dimensional cohomology class on the handlebody $H_{\lambda}$ of the trisection.  We will describe several geometric interpretations of this.

\begin{enumerate}
\item {\bf DeRham Cohomology} Every class $\omega \in H^2_{DR}(X)$ can be represented by a triple $(\beta_1,\beta_2,\beta_3)$ where $\beta_{\lambda}$ is a closed 1-form on $H_{\lambda}$.
\item {\bf $\CC$-bundles}.  Recall that isomorphism classes of $\CC$-line bundles over $X$ are classified by $H^2(X;\ZZ)$ and homotopy classes of maps from $H_{\lambda}$ to $S^1$ are classified by $H^1(H_{\lambda};\ZZ)$.  Take a line bundle $E$ with $1^{\text{st}}$-Chern class $c_1(E)$.  Then $E$ can be trivialized over each sector $Z_{\lambda}$ of the trisection and the triple $(\beta_1,\beta_2,\beta_3)$ corresponding to $c_1(E)$ determines the transition maps (up to homotopy).
\item {\bf $\SpinC$-structures}.  The set of $\SpinC$-structures on $X$ is an affine copy of $H^2(X;\ZZ)$.  Following Gompf, we show how to interpret a $\SpinC$-structure as an almost-complex structure on the spine of the trisection.  Then, the action of $H^2(X;\ZZ)$ can be described in terms of `Lutz twists' along a collection curves representing homology classes in $H_1(H_{\lambda})$ that are hom-dual to $(\beta_1,\beta_2,\beta_3)$.
\end{enumerate}

\vfill

\pagebreak

\section{Singular cohomology}

Let $X = Z_1 \cup Z_2 \cup Z_3$ be a trisection of $X$, let $Y_{\lambda} = \del Z_{\lambda}$ and let $H_{\lambda} = \Z_{\lambda-1} \cap Z_{\lambda}$.  Let $\Sigma$ be the central surface.  The inclusion 
\[\iota_{\lambda}: \Sigma \rightarrow H_{\lambda}\]
induces two maps
\begin{align*}
(\iota_{\lambda})_*&: H_1(\Sigma) \rightarrow H_1(H_{\lambda}) &
(\iota_{\lambda})^*&: H^1(H_{\lambda}) \rightarrow H^1(\Sigma)
\end{align*}
Define subspaces
\begin{align*}
L_{\lambda} &\coloneqq \text{ker}((\iota_{\lambda})_*) \subset H_1(\Sigma) &
M_{\lambda} &\coloneqq \text{Im}((\iota_{\lambda})^*) \subset H^1(\Sigma)
\end{align*}
We can use the intersection pairing $\langle -, - \rangle_{\Sigma}$ on $H_1(\Sigma)$ to define an isomorphsm $\pi: H_1(\Sigma) \rightarrow H^1(\Sigma)$ by setting
\[\pi(x) = \langle -, x \rangle_{\Sigma}\]
Furthermore, we have inclusion maps $\kappa_{i,j}: H_j \hookrightarrow Y_i$ and $\rho_i: Y_i \rightarrow Z_i$ for $i = 1,2,3$ and $j = i-1,i$.   These induce maps
\begin{align*}
(\kappa_{i,j})_*&: H_1(H_j) \rightarrow H_1(Y_i) & (\rho_i)_*&: H_1(Y_i) \rightarrow H_1(Z_i) \\
(\kappa_{i,j})^*&: H^1(Y_i) \rightarrow H^1(H_j) & (\rho_i)^* &: H^1(Z_i) \rightarrow H^1(Y_i)
\end{align*}

\subsection{Hodge Diamond}

The results in \cite{FKSZ,FM} compute homology.  In particular, we have the following expression for $H_*(X)$.

\begin{theorem}[\cite{FM}]
The homology of $X$ with $\ZZ$-coefficients is the homology of the complex
\[
\xymatrix{
0 \ar[r] & \ZZ \ar[r]^-{0} & (L_{1} \cap L_{2}) \oplus (L_{2} \cap L_{3}) \oplus (L_{3} \cap L_{1}) \ar[r]^-{\zeta} & L_{1} \oplus L_{2} \oplus L_{3} \ar[r]^-{\iota} & H_1(\Sigma) \ar[r]^0 &\ZZ \rightarrow 0 }
\]
where $\zeta(a,b,c) = (c - a, a - b, b - c)$ and $\iota(a,b,c) = a + b + c$.
\end{theorem}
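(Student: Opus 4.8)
The plan is to first compute $H^*(X;\ZZ)$ from the Mayer--Vietoris (Cech-to-cohomology) spectral sequence of the open cover $\cT=\{U_1,U_2,U_3\}$, and then transport the answer to homology by Poincar\'e duality. Write $E_1^{p,q}=\bigoplus_{|I|=p+1}H^q(U_I)$ with $d_1$ the alternating restriction map. The three properties of $\cT$ --- each $U_\lambda\simeq\natural_{k_\lambda}(S^1\times B^3)\simeq\bigvee_{k_\lambda}S^1$, each double intersection homotopy equivalent to the genus-$g$ handlebody $H_\lambda\simeq\bigvee_g S^1$, and the triple intersection $\Sigma\times D^2\simeq\Sigma$ --- show that $E_1$ is supported in the rows $q=0,1,2$, with the only nonzero entry in the row $q=2$ being $H^2(\Sigma\times D^2)\cong\ZZ$ in Cech-degree $2$. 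The row $q=0$ is the Cech complex of the constant sheaf $\ZZ$ on the nerve of $\cT$, which is a $2$-simplex; its cohomology is $\ZZ$ in degree $0$ and vanishes otherwise, so it contributes only $H^0(X)\cong\ZZ$, while the surviving entry of the row $q=2$ contributes $H^4(X)\cong\ZZ$. All remaining information lives in the row $q=1$, which is the three-term complex
\[0\longrightarrow\bigoplus_\lambda H^1(U_\lambda)\xrightarrow{\ d_1\ }\bigoplus_\lambda H^1(H_\lambda)\xrightarrow{\ d_1\ }H^1(\Sigma)\longrightarrow0,\]
whose cohomology in Cech-degrees $0,1,2$ computes $H^1(X),H^2(X),H^3(X)$.

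Next I would identify this three-term complex, after transport across the intersection-form isomorphism $\pi$, with the complex in the statement. For each handlebody $H_\lambda$ the map $(\iota_\lambda)^*$ is injective with image $M_\lambda$, so $H^1(H_\lambda)\cong M_\lambda\subset H^1(\Sigma)$, and under this identification the second $d_1$ becomes the signed sum of the inclusions $\bigoplus_\lambda M_\lambda\to H^1(\Sigma)$. For the $U_\lambda$ one uses that $\del Z_\lambda$ is the genus-$g$ Heegaard union of the two handlebodies it contains and that $H^1(U_\lambda)\cong H^1(\del Z_\lambda)$ by inclusion; a Mayer--Vietoris computation for that Heegaard splitting identifies $H^1(\del Z_\lambda)$ with the intersection $M_\lambda\cap M_{\lambda+1}$ of the two relevant subspaces, compatibly with restriction to the double intersections, so that the first $d_1$ becomes the alternating-difference map $\bigoplus_\lambda(M_\lambda\cap M_{\lambda+1})\to\bigoplus_\lambda M_\lambda$. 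Finally, $\pi\colon H_1(\Sigma)\xrightarrow{\sim}H^1(\Sigma)$ carries the Lagrangian $L_\lambda$ onto $M_\lambda$ --- both are saturated of rank $g$, and $\pi(L_\lambda)$ annihilates $L_\lambda$ since $L_\lambda$ is isotropic, while $M_\lambda$ is exactly the annihilator of $L_\lambda$ by the universal coefficient theorem --- hence $M_\lambda\cap M_{\lambda+1}=\pi(L_\lambda\cap L_{\lambda+1})$. Transporting the three-term complex across $\pi^{-1}$ and fixing orientations of $\Sigma$, the $H_\lambda$ and the $U_\lambda$ to normalise the signs in the Cech differential, it becomes
\[0\longrightarrow(L_1\cap L_2)\oplus(L_2\cap L_3)\oplus(L_3\cap L_1)\xrightarrow{\ \zeta\ }L_1\oplus L_2\oplus L_3\xrightarrow{\ \iota\ }H_1(\Sigma)\longrightarrow0\]
with $\zeta$ and $\iota$ as stated.

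The spectral sequence degenerates at $E_2$ for formal reasons: there are only three columns and three rows, and each possible higher differential either emanates from or lands in a group that is already zero on the $E_2$-page (for instance $d_2\colon E_2^{2,1}\to E_2^{0,2}=0$ and $d_2\colon E_2^{2,0}=0\to E_2^{0,1}$). Hence $H^0(X)\cong\ZZ$, $H^4(X)\cong\ZZ$, while $H^1(X)\cong\ker\zeta=L_1\cap L_2\cap L_3$, $H^2(X)\cong\ker\iota/\Image\zeta$, and $H^3(X)\cong H_1(\Sigma)/\Image\iota$. Since $X$ is closed and oriented, Poincar\'e duality gives $H_k(X)\cong H^{4-k}(X)$; placing the terms of the three-term complex above in homological degrees $3,2,1$ (in reversed order) and appending a copy of $\ZZ$ with zero differential in degrees $4$ and $0$ produces exactly the five-term complex in the statement, and reading off its homology recovers $H_*(X;\ZZ)$.

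The step I expect to be the main obstacle is the second one: verifying that, with all orientations pinned down, the alternating restriction maps of the Cech complex really do become the cyclic maps $\zeta(a,b,c)=(c-a,a-b,b-c)$ and $\iota(a,b,c)=a+b+c$ --- equivalently, that the complex produced by the spectral sequence is isomorphic, not merely term-by-term abstractly identified, with the one in the statement --- together with the duality facts $M_\lambda=\pi(L_\lambda)$ and $H^1(\del Z_\lambda)\cong M_\lambda\cap M_{\lambda+1}$ and their naturality in $\Sigma$. The degeneration of the spectral sequence and the two boundary copies of $\ZZ$ are routine. (Alternatively one can avoid the spectral sequence by applying Mayer--Vietoris twice, first to $U_1\cup U_2$ and then gluing in $U_3$ along $U_3\cap(U_1\cup U_2)\simeq\del Z_3$; the same identifications and sign bookkeeping are required.)
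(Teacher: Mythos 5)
Your plan is essentially correct, but note that the paper never proves this statement: it is quoted from \cite{FM}, and the surrounding text runs the logic in the opposite direction --- Proposition \ref{prop:middle-isom} takes the Florens--Moussard complex as given and identifies its middle three terms with the \v{C}ech complex of $\cC^1$ (via the Mayer--Vietoris sequence of the Heegaard splitting $Y_\lambda = H_\lambda \cup H_{\lambda+1}$ and the long exact sequence of the pair $(H_\lambda,\Sigma)$, i.e.\ the same duality facts $H^1(Y_\lambda)\cong L_\lambda\cap L_{\lambda+1}$ and $H^1(H_\lambda)\cong L_\lambda$ that you obtain through $\pi(L_\lambda)=M_\lambda$), and only then deduces Theorems \ref{thrm:trisection-Hodge} and \ref{thrm:11}. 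Your route --- the Mayer--Vietoris spectral sequence of the cover $\cT$, degeneration at $E_2$, identification of the $q=1$ row with the $(\zeta,\iota)$ complex, then Poincar\'e duality --- is an independent proof of the cited theorem, and as a byproduct it establishes the \v{C}ech-theoretic statements of the paper directly rather than deriving them from \cite{FM}; the price is the spectral-sequence (or iterated Mayer--Vietoris) bookkeeping and the sign normalization you flag, which is indeed harmless: the \v{C}ech differentials agree with $\zeta$ and $\iota$ after changing the sign of the identification on individual summands, so one gets an isomorphism of complexes. Two small repairs: the sample $d_2$'s you display go the wrong way (in this spectral sequence $d_2\colon E_2^{p,q}\to E_2^{p+2,q-1}$), though degeneration holds regardless because the $q=0$ and $q=2$ rows have $E_2$ concentrated at $p=0$ and $p=2$ respectively (the $q=0$ row is the nerve of a $2$-simplex, exactly the computation in the paper's Proposition 2.3); and you should remark that each total degree carries a single nonzero $E_\infty$ entry, so there are no extension problems and the statement holds integrally, with $H^0\cong\ZZ$, $H^4\cong\ZZ$ supplying the two outer copies of $\ZZ$ after dualizing.
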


The middle terms of this complex are essentially the Cech complex.

\begin{proposition}
\label{prop:middle-isom}
There is a chain complex isomorphism
\[ 
\xymatrix{
0 \ar[r] \ar[d] ^{0} & (L_{1} \cap L_{2}) \oplus (L_{2} \cap L_{3}) \oplus (L_{3} \cap L_{\alpha}) \ar[r]^-{\zeta} \ar[d]^{\phi_1} & L_{\alpha} \oplus L_{2} \oplus L_{3} \ar[r]^-{\iota} \ar[d]^{\phi_2} & H_1(\Sigma) \ar[d]^-{\pi} \ar[r] & 0  \ar[d]^{0} \\
0 \ar[r] & \bigoplus_{\lambda} H^1(Z_{\lambda}) \ar[r]^-{\delta_1} & \bigoplus_{\lambda} H^1(H_{\lambda}) \ar[r]^-{\delta_2} & H^1(\Sigma) \ar[r] & 0
}
\]
\end{proposition}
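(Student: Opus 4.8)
The plan is to recognize the bottom row as the Cech cochain complex $\Cch^*(\cT,\cC^1)$, and then to construct the vertical isomorphisms out of two standard facts about a trisection: that $L_\lambda=\ker((\iota_\lambda)_*)$ is a Lagrangian direct summand of $(H_1(\Sigma),\langle-,-\rangle_\Sigma)$, and that the Mayer--Vietoris sequence of the Heegaard splitting $\del Z_\lambda=H_\lambda\cup_\Sigma H_{\lambda+1}$ computes $H^1(Z_\lambda)$. First I would pin down the cover: the inclusions $Z_\lambda\hookrightarrow U_\lambda$, $H_\mu\hookrightarrow U_i\cap U_j$ (where $H_\mu=Z_i\cap Z_j$), and $\Sigma\hookrightarrow U_1\cap U_2\cap U_3$ are homotopy equivalences, so $\Cch^0(\cT,\cC^1)=\bigoplus_\lambda H^1(Z_\lambda)$, $\Cch^1(\cT,\cC^1)\cong H^1(H_2)\oplus H^1(H_1)\oplus H^1(H_3)$, which we rewrite as $\bigoplus_\lambda H^1(H_\lambda)$, and $\Cch^2(\cT,\cC^1)=H^1(\Sigma)$; the Cech differentials are alternating sums of restriction maps, which factor through $\rho_\lambda^*$, $\kappa_{i,j}^*$ and $(\iota_\lambda)^*$ (using $Z_i\cap Z_j\subset\del Z_j$ and $\Sigma=\del H_\mu$). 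Thus the bottom row \emph{is} the Cech complex, and the whole statement reduces to producing isomorphisms $\phi_1,\phi_2$ making the two squares commute.

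For $\phi_2$: a cut system for $H_\lambda$ exhibits $L_\lambda$ as a free rank-$g$ direct summand of $H_1(\Sigma)$ spanned by disjoint simple closed curves, so $L_\lambda$ is isotropic, hence Lagrangian, hence $L_\lambda^\perp=L_\lambda$ over $\ZZ$. Since $(\iota_\lambda)_*$ is surjective with kernel $L_\lambda$, the universal coefficient theorem (all groups in sight being torsion-free) identifies $(\iota_\lambda)^*\colon H^1(H_\lambda)\to H^1(\Sigma)$ with $\operatorname{Hom}((\iota_\lambda)_*,\ZZ)$, which is injective with image $M_\lambda=\operatorname{Ann}(L_\lambda)=\{f:f|_{L_\lambda}=0\}$. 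As $\pi(x)(y)=\langle y,x\rangle_\Sigma$, the identity $L_\lambda^\perp=L_\lambda$ gives $\pi(L_\lambda)=\operatorname{Ann}(L_\lambda)=M_\lambda$, so $\psi_\lambda\coloneqq((\iota_\lambda)^*)^{-1}\circ\pi|_{L_\lambda}$ is an isomorphism $L_\lambda\xrightarrow{\sim}H^1(H_\lambda)$ characterized by $(\iota_\lambda)^*\psi_\lambda=\pi|_{L_\lambda}$. Then $\phi_2=\bigoplus_\lambda\psi_\lambda$, up to signs fixed in the last step.

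For $\phi_1$: since $\del Z_\lambda=\#^{k_\lambda}(S^1\times S^2)$ bounds $Z_\lambda\cong\natural^{k_\lambda}(S^1\times B^3)$, the map $\rho_\lambda$ induces an isomorphism on $H_1$, hence $\rho_\lambda^*\colon H^1(Z_\lambda)\xrightarrow{\sim}H^1(Y_\lambda)$. Mayer--Vietoris identifies $H^1(Y_\lambda)$ with $\ker\big((\alpha,\beta)\mapsto(\iota_\lambda)^*\alpha-(\iota_{\lambda+1})^*\beta\big)\subset H^1(H_\lambda)\oplus H^1(H_{\lambda+1})$, and $(\alpha,\beta)\mapsto(\iota_\lambda)^*\alpha$ carries this kernel isomorphically onto $M_\lambda\cap M_{\lambda+1}$; combined with $\pi(L_\lambda\cap L_{\lambda+1})=\pi(L_\lambda)\cap\pi(L_{\lambda+1})=M_\lambda\cap M_{\lambda+1}$ this yields an isomorphism $L_\lambda\cap L_{\lambda+1}\xrightarrow{\sim}H^1(Z_\lambda)$, and (with $L_4\coloneqq L_1$) these assemble cyclically to $\phi_1$. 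The rightmost vertical map $\pi$ is already an isomorphism by hypothesis.

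The one substantive point --- and the main obstacle --- is showing that both squares commute on the nose with a single consistent choice of signs. Because each $(\iota_\mu)^*$ is injective, it suffices to compare $\phi_2\circ\zeta$ with $\delta_1\circ\phi_1$ and $\pi\circ\iota$ with $\delta_2\circ\phi_2$ after applying the maps $(\iota_\mu)^*$ to land in copies of $H^1(\Sigma)$; by construction all four composites then become explicit integral combinations of $\pi(a),\pi(b),\pi(c)$. The two ingredients are: (i) under the Mayer--Vietoris identification, restriction of a class on $Z_\lambda$ to \emph{either} of its boundary handlebodies $H_\lambda$ or $H_{\lambda+1}$ records the \emph{same} element of $M_\lambda\cap M_{\lambda+1}$, so the only signs that enter come from the Cech differential; and (ii) the Cech differential for a three-element cover carries the usual alternating signs. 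Carrying this out shows that the cyclic pattern $\zeta(a,b,c)=(c-a,a-b,b-c)$ is exactly what makes the left square commute, and $\iota(a,b,c)=a+b+c$ the right square, once one reconciles the cyclic and the ordered/alternating conventions by placing an overall sign on $\phi_1$ and a sign on the $L_1$-summand of $\phi_2$. With those choices the diagram commutes and every vertical arrow is an isomorphism, which is the assertion; that the top row is a chain complex is part of the cited theorem of Florens--Moussard, and in any case follows a posteriori from the commuting ladder.
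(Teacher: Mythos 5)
Your argument is correct, and it reaches the same identifications as the paper but by a different mechanism. The paper defines $\phi_2$ via Lefschetz duality, $H^1(H_{\lambda})\cong H_2(H_{\lambda},\Sigma)\cong\ker(H_1(\Sigma)\to H_1(H_{\lambda}))=L_{\lambda}$ from the long exact sequence of the pair, and $\phi_1$ via Poincar\'e duality on $Y_{\lambda}$ together with the \emph{homological} Mayer--Vietoris sequence of the Heegaard splitting, $H^1(Y_{\lambda})\cong H_2(Y_{\lambda})\cong L_{\lambda}\cap L_{\lambda+1}$ (using $H_1(Z_{\lambda})\cong H_1(Y_{\lambda})$ to pass to $H^1(Z_{\lambda})$). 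You instead stay entirely in cohomology: the universal coefficient theorem identifies $(\iota_{\lambda})^*$ with the dual of the surjection $(\iota_{\lambda})_*$, so $M_{\lambda}$ is the annihilator of $L_{\lambda}$, and the Lagrangian property $L_{\lambda}^{\perp}=L_{\lambda}$ gives $\pi(L_{\lambda})=M_{\lambda}$ and $\pi(L_{\lambda}\cap L_{\lambda+1})=M_{\lambda}\cap M_{\lambda+1}$, with the cohomological Mayer--Vietoris sequence supplying $H^1(Z_{\lambda})\cong H^1(Y_{\lambda})\cong M_{\lambda}\cap M_{\lambda+1}$. Of course these are two faces of the same coin ($\pi$ is Poincar\'e duality on $\Sigma$), but your version has a concrete payoff: by constructing $\psi_{\lambda}$ so that $(\iota_{\lambda})^*\psi_{\lambda}=\pi|_{L_{\lambda}}$, compatibility of the vertical maps with $\pi$ is built in, and the commutativity of the two squares reduces to the finite sign check you describe (reconciling the cyclic conventions for $\zeta,\iota$ with the alternating Cech signs by adjusting signs on summands of $\phi_1,\phi_2$). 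The paper's proof only constructs $\phi_1$ and $\phi_2$ and leaves commutativity implicit, so your attention to that point is a genuine addition; it would be worth actually writing out the sign bookkeeping once, since that is the only step you assert rather than verify.
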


The second complex of this proposition is exactly the Cech complex of $\cC^1$ with respect to $\cT$, thus by applying Poincare Duality we obtain the following corollary.

\begin{corollary}
for $i = 1,2,3$, there are isomorphisms
\[H_{4-i}(X;\ZZ) \cong H^i(X;\ZZ) \cong \Hch^{i-1}(\cT,\cC^1)\]
\end{corollary}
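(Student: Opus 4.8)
The plan is to exhibit the three vertical maps explicitly, check that each is an isomorphism, and then verify that the two squares commute; since the bottom row is by definition the \v Cech complex $\Cch^*(\cT,\cC^1)$, only the verticals must be constructed, and $\pi$ is already given. To build $\phi_2$, recall the two standard facts about a genus-$g$ handlebody $H_\lambda$ with $\partial H_\lambda = \Sigma$: the map $(\iota_\lambda)_*$ is onto with kernel $L_\lambda$, which is a rank-$g$ direct summand of $H_1(\Sigma)$ that is Lagrangian for $\langle -,-\rangle_\Sigma$ (the \emph{half lives, half dies} principle); dually $(\iota_\lambda)^*$ is injective with image the rank-$g$ summand $M_\lambda = \mathrm{Ann}(L_\lambda)$. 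Because $L_\lambda$ is isotropic, $\pi$ carries $L_\lambda$ into $M_\lambda$, and since both are rank-$g$ summands of $H^1(\Sigma)$ and $\pi$ is unimodular, $\pi(L_\lambda) = M_\lambda$; composing with $((\iota_\lambda)^*)^{-1}\colon M_\lambda \to H^1(H_\lambda)$ gives an isomorphism $L_\lambda \to H^1(H_\lambda)$, and $\phi_2$ is the direct sum of these (up to a sign on one summand, explained below).

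To build $\phi_1$, use that each sector $Z_\mu$ is a $4$-dimensional $1$-handlebody whose boundary $Y_\mu$ is Heegaard-split as $H_\lambda \cup_\Sigma H_{\lambda'}$ by the central surface, for the appropriate pair $\{\lambda,\lambda'\}$. Then $(\rho_\mu)^*\colon H^1(Z_\mu)\to H^1(Y_\mu)$ is an isomorphism (both groups are free of rank $k_\mu$ and $Y_\mu\hookrightarrow Z_\mu$ is a $\pi_1$-isomorphism), while the Mayer--Vietoris sequence of $Y_\mu = H_\lambda\cup_\Sigma H_{\lambda'}$ shows that restriction $H^1(Y_\mu)\to H^1(\Sigma)$ is injective with image exactly $M_\lambda\cap M_{\lambda'}$: a class in $H^1(H_\lambda)\oplus H^1(H_{\lambda'})$ extends over $Y_\mu$ iff its two restrictions to $\Sigma$ agree, and the connecting map out of $H^0(\Sigma)$ vanishes since the restrictions $H^0(H_\bullet)\to H^0(\Sigma)$ are onto. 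Hence restriction to $\Sigma$ identifies $H^1(Z_\mu)$ with $M_\lambda\cap M_{\lambda'} = \pi(L_\lambda\cap L_{\lambda'})$, and $\phi_1$ is defined summand-by-summand as $(\text{restriction to }\Sigma)^{-1}\circ \pi$, landing in $L_\lambda\cap L_{\lambda'}$. Each of $\pi$, $\phi_1$, $\phi_2$ is then visibly an isomorphism.

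It remains to check the two squares. For the right square one uses that $\delta_2$ is an alternating sum of the three restrictions $(\iota_\lambda)^*\colon H^1(H_\lambda)\to H^1(\Sigma)$; since $\phi_2$ was built so that $(\iota_\lambda)^*$ undoes it, $\delta_2\circ\phi_2$ becomes $\pm\pi(a)\pm\pi(b)\pm\pi(c)$, which matches $\pi\circ\iota$ once the minus sign of the \v Cech differential is absorbed into $\phi_2$ on the corresponding summand. For the left square one uses that the component of $\delta_1$ landing in $H^1(H_\lambda)$ is the difference of the restrictions from the two sectors whose intersection handlebodies contain $H_\lambda$; under the identifications above this is exactly the cyclic-difference pattern $\zeta(a,b,c) = (c-a, a-b, b-c)$, which is the \v Cech coboundary of a cyclically ordered three-element cover. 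A short bookkeeping computation shows the sign adjustments of $\phi_1$ and $\phi_2$ demanded by the two squares are mutually compatible, so both squares commute after normalizing the isomorphisms by those signs.

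I expect the sign bookkeeping, together with pinning down which pair $\{H_\lambda,H_{\lambda'}\}$ Heegaard-splits which boundary $Y_\mu$, to be the fiddliest part; the one genuinely substantive input is the Mayer--Vietoris identification $H^1(Z_\mu)\cong M_\lambda\cap M_{\lambda'}$, since it is what makes $\phi_1$ definable and the left square meaningful, and once it is in hand the chain-isomorphism statement follows essentially formally.
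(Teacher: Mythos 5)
Your argument, as written, is a proof of Proposition \ref{prop:middle-isom} (the chain-complex isomorphism), not of the corollary itself, and on that score it is essentially correct: the identification $\phi_2|_{L_\lambda} = ((\iota_\lambda)^*)^{-1}\circ\pi$ using $M_\lambda = \mathrm{Ann}(L_\lambda) = \pi(L_\lambda)$ (half lives, half dies), the identification $H^1(Z_\mu)\cong H^1(Y_\mu)\cong M_\lambda\cap M_{\lambda'} = \pi(L_\lambda\cap L_{\lambda'})$ via cohomological Mayer--Vietoris, and the sign normalization do work (e.g.\ negating $\phi_2$ on the $L_1$-summand and $\phi_1$ on all three summands makes both squares commute with the standard \v Cech conventions). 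This is a somewhat different, and in one respect better, route than the paper's: the paper constructs $\phi_1,\phi_2$ via Lefschetz duality, $H^1(H_\lambda)\cong H_2(H_\lambda,\Sigma)\cong L_\lambda$ and $H^1(Y_\lambda)\cong H_2(Y_\lambda)\cong L_\lambda\cap L_{\lambda+1}$, using the homological Mayer--Vietoris and pair sequences, and never checks commutativity of the squares; your formulation through restriction maps and the intersection form makes that check, and the sign bookkeeping, actually doable.

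The gap is that the corollary asserts $H_{4-i}(X;\ZZ)\cong H^i(X;\ZZ)\cong \Hch^{i-1}(\cT,\cC^1)$, and nothing in your argument mentions the homology or cohomology of $X$. Two inputs are missing. First, the Florens--Moussard theorem quoted just above the proposition: the top row of your diagram is the middle of their five-term complex, whose homology in those three positions is $H_3(X)$, $H_2(X)$, $H_1(X)$. Only by citing that theorem and transporting it through your chain isomorphism (with the degree bookkeeping: FM positions $3,2,1$ correspond to \v Cech degrees $0,1,2$) do you get $\Hch^{i-1}(\cT,\cC^1)\cong H_{4-i}(X;\ZZ)$; without it you have an isomorphism of two abstract complexes and no statement about $X$. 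Second, Poincar\'e duality for the closed oriented $4$-manifold $X$, which supplies the remaining isomorphism $H_{4-i}(X;\ZZ)\cong H^i(X;\ZZ)$. Both are one-line citations, but they are precisely what the corollary adds beyond the proposition, so they need to appear.
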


\begin{proof}[Proof of Proposition \ref{prop:middle-isom}]
By definition, $Z_{\lambda} = \natural_{k_{\lambda}} S^1 \times B^3$ and $Y_{\lambda} = \del Z_{\lambda} = \#_{k_{\lambda}} S^1 \times S^2$.  In particular
\[H_1(Z_{\lambda}) \cong H_1(Y_{\lambda}) \cong \ZZ^{k_{\lambda}}\]
We can apply the Mayer-Vietoris sequence to the Heegaard splitting $Y_{\lambda} = H_{\lambda} \cup H_{\lambda+1}$ to get the sequence
\[\rightarrow H_2(H_{\lambda}) \oplus H_2(H_{\lambda+1}) \rightarrow H_2(Y_{\lambda}) \rightarrow H_1(\Sigma) \rightarrow H_1(H_{\lambda}) \oplus H_1(H_{\lambda+1}) \rightarrow H_1(Y_{\lambda}) \rightarrow H_0(\Sigma)\]
Since $H_2(H_{\lambda}) = H_2(H_{\lambda+1}) = 0$, we see that
\[H^1(Y_{\lambda}) \cong H_2(Y_{\lambda}) \cong \text{ker}\left( H_1(\Sigma) \rightarrow  H_1(H_{\lambda}) \oplus H_1(H_{\lambda+1}) \right) \cong L_{\lambda} \cap L_{\lambda+1}\]
where the first isomorphism follows by Poincare duality.  This defines $\phi_1$.

Using the long exact sequence of the pair $(H_{\lambda},\Sigma)$ we obtain
\[ H_2(H_{\lambda}) \rightarrow H_2(H_{\lambda},\Sigma) \rightarrow H_1(\Sigma) \rightarrow H_1(H_{\lambda}) \rightarrow \]
Since $H_2(H_{\lambda}) = 0$, we see that
\[H^1(H_{\lambda}) \cong  H_2(H_{\lambda},\Sigma) \cong \text{ker}(H_1(\Sigma) \rightarrow H_1(H_{\lambda})) = L_{\lambda}\]
This defines $\phi_2$.
\end{proof}

The remaining cohomology groups are straightforward to calculate.

\begin{proposition}
The cohomology groups of $\cH^0$ are
\begin{align*}
\Hch^0(\cT,\cH^0) & \cong \ZZ \\
\Hch^1(\cT,\cH^0) & \cong 0 \\
\Hch^2(\cT,\cH^0) & \cong 0 \\
\end{align*}
\end{proposition}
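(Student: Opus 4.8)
The plan is to compute the Cech complex $\Cch^*(\cT,\cH^0)$ by hand; there is no genuine difficulty here, only a little bookkeeping. First I would record that every open set involved is path-connected: $U_\lambda \cong \natural_{k_\lambda} S^1 \times B^3$, each double overlap $U_i \cap U_j \cong \natural_g S^1 \times B^3$, and the triple overlap $U_1 \cap U_2 \cap U_3 \cong \Sigma_g \times D^2$. Hence $\cH^0$ takes the value $\ZZ$ on each of them, generated by the class of the constant function $1$, and every restriction map between two of these spaces sends generator to generator and is therefore an isomorphism $\ZZ \xrightarrow{\sim} \ZZ$. Consequently the Cech complex has the shape
\[
\ZZ^3 \xrightarrow{\ \delta_0\ } \ZZ^3 \xrightarrow{\ \delta_1\ } \ZZ \longrightarrow 0,
\]
where, indexing the first $\ZZ^3$ by $\{1,2,3\}$ and the second by $\{12,13,23\}$, we have $\delta_0(a_1,a_2,a_3) = (a_2-a_1,\ a_3-a_1,\ a_3-a_2)$ and $\delta_1(b_{12},b_{13},b_{23}) = b_{12} - b_{13} + b_{23}$.

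Next I would carry out the three one-line computations. In degree $0$, $\ker\delta_0$ is the diagonal $\{(a,a,a) : a \in \ZZ\} \cong \ZZ$, giving $\Hch^0(\cT,\cH^0) \cong \ZZ$. In degree $2$, $\delta_1$ is surjective since $\delta_1(1,0,0) = 1$, so $\Hch^2(\cT,\cH^0) \cong \coker\delta_1 = 0$. In degree $1$, I would check that $\ker\delta_1 = \Image\,\delta_0$: both equal the rank-two subgroup $\{(u,\,v,\,v-u) : u,v \in \ZZ\} \subset \ZZ^3$ — it is the image because setting $u = a_2-a_1$ and $v = a_3-a_1$ forces $a_3-a_2 = v-u$, and it is the kernel because $b_{12}-b_{13}+b_{23} = 0$ forces $b_{23} = b_{13}-b_{12}$. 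Hence $\Hch^1(\cT,\cH^0) = 0$.

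Finally I would add the conceptual remark explaining why all of this was inevitable: $\cT$ is an open cover by three sets all of whose multiple intersections are nonempty and connected, so its nerve is the filled $2$-simplex $\Delta^2$, and $\Cch^*(\cT,\cH^0)$ is precisely the simplicial cochain complex of $\Delta^2$ with $\ZZ$-coefficients; since $\Delta^2$ is contractible, its reduced cohomology vanishes and $\Hch^0 \cong \ZZ$, $\Hch^1 \cong \Hch^2 \cong 0$. The only step requiring any attention is getting the signs and the multi-indexing in $\delta_0,\delta_1$ correct, and the connectedness of every piece of the cover makes even that automatic — so I do not anticipate a real obstacle.
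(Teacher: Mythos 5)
Your proposal is correct and follows essentially the same route as the paper: observe that every set, double overlap, and triple overlap in $\cT$ is connected, so the \v{C}ech complex is $\ZZ^3 \to \ZZ^3 \to \ZZ \to 0$, and then compute kernels and images directly (your alternating sign convention versus the paper's cyclic one is immaterial). The closing remark identifying the complex with the simplicial cochain complex of the nerve $\Delta^2$ is a pleasant conceptual addition but not a different argument.
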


\begin{proof}
Each open set $U_i$ and each double and triple intersection is connected and so
\[ H^0(U_i;\ZZ) \cong H^0(U_i \cap U_j; \ZZ) \cong H^0(U_1 \cap U_2 \cap U_3) \cong \ZZ\]
The Cech complex is therefore
\[0 \rightarrow \ZZ^3 \rightarrow \ZZ^3 \rightarrow \ZZ \rightarrow 0\]
If $\{a,b,c\}$ is a chain in $\Cch^0(\cT,\cH^0)$ then
\[\delta_0 \{a,b,c\} = \{a - b, b - c, c - a\}\]
Thus, this chain is coclosed if and only if $a = b = c$.  Thus, $\Hch^0(\cT,\cH^0)  \cong \ZZ \langle \{a,a,a\} \rangle \cong \ZZ$.
If $\{a,b,c\}$ is a chain in $\Cch^1(\cT,\cH^0)$, then
\[ \delta_1 \{a,b,c\} = \{a + b + c\}\]
The chain is coclosed if and only if it has the form $\{a,b,-a-b\} = a\{1,0,-1\} + b\{0,1,-1\}$.  Both elements $\{1,0,-1\}$ and $\{0,1,-1\}$ are in the image of $\delta_0$, so $\Hch^1(\cT,\cH^0) \cong 0$.  Finally, the differential $\delta_1$ is surjective so $\Hch^1(\cT,\cH^0) \cong 0$ as well.
\end{proof}

\begin{proposition}
The cohomology groups of $\cH^2$ are
\begin{align*}
\Hch^0(\cT,\cH^2) & \cong 0\\
\Hch^1(\cT,\cH^2) & \cong 0 \\
\Hch^2(\cT,\cH^2) & \cong \ZZ \\
\end{align*}
\end{proposition}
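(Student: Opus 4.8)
The plan is to compute the Cech complex $\Cch^*(\cT,\cH^2)$ directly, in exactly the same spirit as the preceding computation of $\Hch^*(\cT,\cH^0)$. The first step is to identify the second cohomology of each piece of the open cover $\cT = \{U_1,U_2,U_3\}$ using the three structural properties listed in the introduction. By property (1), each $U_i$ is diffeomorphic to $\natural_{k_i} S^1 \times B^3$, which is homotopy equivalent to a wedge of $k_i$ circles; hence $H^2(U_i;\ZZ) = 0$. By property (2), each double intersection $U_i \cap U_j$ is diffeomorphic to $\natural_{g} S^1 \times B^3$, again a wedge of circles up to homotopy, so $H^2(U_i \cap U_j;\ZZ) = 0$. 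By property (3), the triple intersection $U_1 \cap U_2 \cap U_3$ is diffeomorphic to $\Sigma_g \times D^2$, which deformation retracts onto the closed orientable surface $\Sigma_g$; therefore $H^2(U_1 \cap U_2 \cap U_3;\ZZ) \cong H^2(\Sigma_g;\ZZ) \cong \ZZ$.

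Substituting these into the Cech complex of $\cH^2$ with respect to $\cT$, we obtain
\[0 \rightarrow 0 \rightarrow 0 \rightarrow \ZZ \rightarrow 0,\]
with the only nonzero group sitting in Cech-degree $2$. Reading off the cohomology of this complex gives $\Hch^0(\cT,\cH^2) \cong 0$, $\Hch^1(\cT,\cH^2) \cong 0$, and $\Hch^2(\cT,\cH^2) \cong \ZZ$, which is the claimed statement.

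There is no substantive obstacle in this proof; the only point that deserves a moment's attention is the identification of the homotopy types of the pieces of $\cT$ — which is exactly what was arranged by slightly enlarging the sectors of the trisection — together with the observation that $\Sigma_g \times D^2$ retracts onto $\Sigma_g$, whose top cohomology is $\ZZ$ rather than zero. Once this is in place, the conclusion is immediate from the shape of the resulting Cech complex, just as in the $\cH^0$ case.
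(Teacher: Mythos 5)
Your proposal is correct and follows the same route as the paper: identify $H^2$ of each piece of the cover (zero for the sectors and double intersections, $\ZZ$ for the triple intersection $\Sigma_g \times D^2$), note the Cech complex collapses to $0 \rightarrow 0 \rightarrow 0 \rightarrow \ZZ \rightarrow 0$, and read off the cohomology. The only difference is that you spell out the $H^2(\Sigma_g \times D^2;\ZZ) \cong \ZZ$ step, which the paper leaves implicit.
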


\begin{proof}
Each $U_i$ and each double intersection $U_i \cap U_j$ is a four-dimensional 1-handlebody.  Thus
\[H^2(U_i;\ZZ) \cong H^2(U_i \cap U_j;\ZZ) \cong 0\]
The Cech complex is therefore
\[ 0 \rightarrow 0 \rightarrow 0 \rightarrow \ZZ \rightarrow 0\]
and the proposition follows immediately.
\end{proof}

\vfill

\pagebreak

\section{deRham}

Let $\cDR^i$ denote the presheaf on $X$ defined as 
\[\cDR^i(U) \coloneqq H^i_{DR}(U;\RR)\]

\subsection{DeRham to Cech isomorphism}

\begin{theorem}
There are isomorphisms
\begin{align*}
H^1_{DR}(X;\RR) &\cong \Hch^0(\cT,\cDR^1) & H^0_{DR}(X;\RR) &\cong \Hch^0(\cT,\cDR^0)\\
H^2_{DR}(X;\RR) &\cong \Hch^1(\cT,\cDR^1) & H^4_{DR}(X;\RR) &\cong \Hch^2(\cT,\cDR^2)\\
H^3_{DR}(X;\RR) &\cong \Hch^2(\cT,\cDR^1) 
\end{align*}
\end{theorem}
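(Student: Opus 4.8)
The plan is to deduce the five isomorphisms from Theorem~\ref{thrm:trisection-Hodge} together with de Rham's theorem, by base-changing everything from $\ZZ$ to $\RR$.

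First I would record a presheaf-level comparison. Each open set of the cover $\cT$ --- each $U_i$, each double intersection $U_i\cap U_j$, and the triple intersection $U_1\cap U_2\cap U_3$ --- is diffeomorphic to $\natural_k S^1\times B^3$ or to $\Sigma_g\times D^2$, and hence has torsion-free integral cohomology. Applying de Rham's theorem on each such $U$ gives $\cDR^j(U)=H^j_{DR}(U;\RR)\cong H^j(U;\RR)\cong H^j(U;\ZZ)\otimes_\ZZ\RR=\cC^j(U)\otimes_\ZZ\RR$, naturally in $U$, so $\cDR^j\cong\cC^j\otimes_\ZZ\RR$ as presheaves on $X$. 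Therefore the Čech cochain complex $\Cch^\bullet(\cT,\cDR^j)$ is the (finitely generated, free) complex $\Cch^\bullet(\cT,\cC^j)$ with $-\otimes_\ZZ\RR$ applied; since $\RR$ is flat over $\ZZ$, this yields $\Hch^i(\cT,\cDR^j)\cong\Hch^i(\cT,\cC^j)\otimes_\ZZ\RR$ for all $i,j$. (Equivalently, one simply re-runs the computations of the previous section --- the evaluations of $\Hch^\bullet(\cT,\cC^0)$, $\Hch^\bullet(\cT,\cC^2)$ and the isomorphism of Proposition~\ref{prop:middle-isom} for $\cC^1$ --- verbatim with $\RR$-coefficients.)

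Next, de Rham's theorem on $X$ itself gives $H^n_{DR}(X;\RR)\cong H^n(X;\RR)\cong H^n(X;\ZZ)\otimes_\ZZ\RR$. Tensoring the isomorphism of Theorem~\ref{thrm:trisection-Hodge} with $\RR$ and combining with the previous paragraph,
\[H^n_{DR}(X;\RR)\ \cong\ \bigoplus_{i+j=n}\Hch^i(\cT,\cDR^j).\]
Finally, the ``Hodge diamond'' half of Theorem~\ref{thrm:trisection-Hodge} (again tensored with $\RR$) says that among the groups $\Hch^i(\cT,\cC^j)$ with $0\le i,j\le 2$ the only possibly nonzero ones are $\Hch^0(\cT,\cC^0)$, $\Hch^0(\cT,\cC^1)$, $\Hch^1(\cT,\cC^1)$, $\Hch^2(\cT,\cC^1)$ and $\Hch^2(\cT,\cC^2)$, so the same holds for $\cDR^j$. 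Reading off the graded summand in each total degree then isolates exactly one bidegree: $n=0$ gives $(i,j)=(0,0)$, $n=1$ gives $(0,1)$, $n=2$ gives $(1,1)$, $n=3$ gives $(2,1)$, and $n=4$ gives $(2,2)$, which are precisely the five stated isomorphisms.

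I do not expect a serious obstacle; the only points needing care are (i) that the presheaf identification $\cDR^j\cong\cC^j\otimes_\ZZ\RR$ is natural enough to pass to Čech cohomology, which reduces to naturality of the de Rham isomorphism and flatness of $\RR$ over $\ZZ$, and (ii) confirming that the Hodge-diamond vanishing really does pin down a single $(i,j)$ for each $n$, so no extension problem intervenes. A self-contained alternative that avoids Theorem~\ref{thrm:trisection-Hodge} entirely is to run the Mayer--Vietoris (Čech--de Rham) spectral sequence $E_1^{p,q}=\Cch^p(\cT,\cDR^q)\Rightarrow H^{p+q}_{DR}(X;\RR)$ of the cover $\cT$: the homotopy types of the pieces force $E_2^{p,q}=\Hch^p(\cT,\cDR^q)$ to vanish outside the five listed bidegrees, every higher differential vanishes for position reasons (the one $d_2$ that is not immediately out of range lands in $\Hch^2(\cT,\cDR^0)$, which is $0$ by the computation of the previous section), and the abutment has a one-step filtration in each degree, giving the same conclusion.
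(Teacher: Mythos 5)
Your argument is correct, but it is a genuinely different route from the one taken in the paper. The paper proves the theorem by hand, degree by degree: it writes down explicit cochain-level maps in both directions, using that $H^k_{DR}$ vanishes in positive degrees on the pieces of the cover to choose primitives (e.g.\ $\omega|_{U_\lambda}=d\alpha_\lambda$ and $\omega\mapsto(\alpha_1-\alpha_3,\alpha_2-\alpha_1,\alpha_3-\alpha_2)$ in degree $2$, iterated primitives in degrees $3$ and $4$), and checks well-definedness up to Čech coboundaries. You instead deduce the statement formally: identify $\cDR^j\cong\cC^j\otimes_\ZZ\RR$ as presheaves on the cover via naturality of de Rham's theorem, use flatness of $\RR$ to get $\Hch^i(\cT,\cDR^j)\cong\Hch^i(\cT,\cC^j)\otimes\RR$, tensor Theorem~\ref{thrm:trisection-Hodge} with $\RR$, and use the Hodge-diamond vanishing (established in Section~2 via Proposition~\ref{prop:middle-isom} and the two presheaf computations, independently of any de Rham input, so there is no circularity) to isolate a single bidegree in each total degree; your Čech--de Rham spectral sequence variant is likewise sound, the only potentially nonzero higher differential landing in $\Hch^2(\cT,\cDR^0)=0$. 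What your approach buys is brevity and a clean separation of the topology (Section~2) from the analysis; what the paper's approach buys is the explicit form-level correspondence, which is not merely decorative: the subsequent Intersection Pairing and Integration Pairing theorems are stated in terms of the concrete cocycles $(\alpha_1,\alpha_2,\alpha_3)$, $\beta$, $\omega$ produced by the paper's construction, and your abstract isomorphisms would not by themselves supply those representatives. Since the theorem as stated only asserts the existence of isomorphisms, your proof is complete for the statement itself.
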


We break up the proof by the degree of the cohomology group:

\noindent {\bf Degree 0}:  The cohomology group $H^0_{DR}(X;\RR)$ consists of constant functions.  Given a constant function $C: X \rightarrow \RR$, its restriction to $U_{\lambda}$ is also a constant function $C: U_{\lambda} \rightarrow \RR$ and therefore an element of $H^0_{DR}(U_{\lambda};\RR)$.  The isomorphism from deRham to Cech is given by $C \mapsto (C,C,C)$.

Conversely, an element of $\Hch^0(\cT,\cDR^0)$ is a triple $(C_1,C_2,C_3)$ of constant functions whose restrictions to the pairwise intersections agree.  In other words, $C_1 = C_2 = C_3 = C$.  The inverse isomorphism is therefore $(C,C,C) \mapsto C$. \\

\noindent {\bf Degree 1}: The map from DeRham to Cech is identical to the degree 0 case above.  Given some closed 1-form $\beta$, the corresponding element in Cech cohomology is given by restricting $\beta$ to each $U_{\lambda}$.  

The inverse isomorphism is more complicated.  In particular, an element of $\Hch^0(\cT,\cDR^1)$ is a triple $([\beta_1],[\beta_2],[\beta_3])$ of cohomology classes, not specific closed forms.  Choose representative closed 1-forms $\beta_1,\beta_2,\beta_3$.  By assumption, the restrictions satisfy
\[ [\beta_{\lambda-1}] = [\beta_{\lambda}] \in H^1_{DR}(U_{\lambda-1} \cap U_{\lambda};\RR)\]
Therefore, $\beta_{\lambda} - \beta_{\lambda-1} = dg_{\lambda}$ for some function $g: U_{\lambda - 1} \cap U_{\lambda} \rightarrow \RR$.

\noindent {\bf Exercise:} Show that there exist functions $f_{\lambda}: U_{\lambda} \rightarrow \RR$ such that on $U_{\lambda -1 } \cap U_{\lambda}$
\[ \beta_{\lambda -1} + df_{\lambda - 1} = \beta_{\lambda} + df_{\lambda}\]

Consequently, we can represent our original Cech class by the triple $(\beta_1 + df_1, \beta_2 + df_2, \beta_3 + df_3)$ and these 1-forms glue into a global 1-form $\beta$. \\

\noindent {\bf Degree 2}: In this case, the maps in both directions are more complicated and we need to check that they are in fact isomorphisms.  First, choose a class $[\omega] \in H^2_{DR}(X;\RR)$ and represent it by a closed 2-form $\omega$.  The restriction $\omega|_{U_{\lambda}}$ is exact since $H^2_{DR}(U_{\lambda};\RR) = 0$, thus we can choose a primitive $\alpha_{\lambda}$ for $\omega|_{U_{\lambda}}$.  Over the double intersection $U_{\lambda-1} \cap U_{\lambda}$, the restrictions $\alpha_{\lambda - 1}$ and $\alpha_{\lambda}$ are both primitives for $\omega$, therefore their difference $\alpha_{\lambda} - \alpha_{\lambda-1}$ is closed.  Consequently, the map from DeRham to Cech is given by
\[ \omega \mapsto (\alpha_1 - \alpha_3, \alpha_2 - \alpha_1, \alpha_3 - \alpha_2)\]
There were three sources of indeterminancy:
\begin{enumerate}
\item we could replace $\alpha_{\lambda}$ by $\alpha_{\lambda} + df_{\lambda}$ for some function $f_{\lambda}: U_{\lambda} \rightarrow \RR$,
\item we could replace $\omega$ by $\omega + d\mu$ for some global 1-form $\mu$, and
\item we could replace the primitive $\alpha_{\lambda}$ with $\alpha_{\lambda} + \rho_{\lambda}$, where $\rho$ is a closed 1-form on $U_{\lambda}$
\end{enumerate}

\noindent {\bf Exercise}: 

\begin{enumerate}
\item Show that modifying the primitives $\{\alpha_{\lambda}\}$ by exact 1-forms results in the same Cech cochain.
\item Show that we can choose primitives for $\omega + d\mu$ that result in the same Cech cochain 
\item Show that modifying the primitives $\{\alpha_{\lambda}\}$ by closed 1-forms $\{\rho_{\lambda}\}$ changes the Cech cochain by a Cech coboundary.
\end{enumerate}

Conversely,  given a class in $\Hch^0(\cT,\cDR^1)$, choose a fixed cochain $([\beta_1],[\beta_2],[\beta_3])$ and fixed closed 1-forms $\{\beta_1,\beta_2,\beta_3\}$ to represent this class.

\noindent {\bf Exercise:} 
\begin{enumerate}
\item There exists a triple of 1-forms $\{\alpha_{\lambda}\}$ on the open sets $\{U_{\lambda}\}$ such that $\alpha_{\lambda} - \alpha_{\lambda - 1} = \beta_{\lambda}$.
\item The 2-forms $\{d\alpha_1,d\alpha_2,d\alpha_3\}$ glue together to give a global 2-form $\omega$.
\item Modifying the choices -- modifying the Cech cochain by a coboundary, modifying the closed 1-forms $\{\beta_{\lambda}\}$ by exact 1-forms, modifying the choices of $\{\alpha_{\lambda}\}$ --- results in a cohomologous 2-form $\omega'$.
\end{enumerate}

\noindent {\bf Degree 3}

Given a class $[\mu] \in H^3_{DR}(X;\RR)$, represent it by a closed 3-form $\mu$.  Since $H^3_{DR}(U_{\lambda};\RR) = 0$, we can choose a primitive $\omega_{\lambda}$ for $\mu$ over each $U_{\lambda}$.  The differences $\omega_{\lambda} - \omega_{\lambda-1}$ are closed and represent elements of $H^3_{DR}(U_{\lambda} \cap U_{\lambda-1};\RR) = 0$.  In particular, these forms are also exact and we can choose further primitives 1-forms$\{\beta_{\lambda}\}$.  Restricting to the triple intersection $U_1 \cap U_2 \cap U_3$ we get 1-form $\beta = \beta_1 + \beta_2 + \beta_3$ that is closed since
\[d\beta = d \beta_1 + d \beta_2 + d\beta_3 = (\omega_1 - \omega_3) + (\omega_2 - \omega_1) + (\omega_3 - \omega_2) = 0.\]
Thus, $[\mu]$ is sent to an element $[\beta] \in H^1_{DR}(U_1 \cap U_2 \cap U_3;\RR)$ and therefore represents a Cech 2-cocycle.

\noindent {\bf Exercise:}
\begin{enumerate}
\item Show that changing $\omega_{\lambda}$ by a closed 2-form results in the same Cech 2-cocycle
\item Show that changing $\beta_{\lambda}$ by a closed 1-form modifies the resulting Cech 2-cocycle by a Cech 2-coboundary.
\end{enumerate}

The inverse map can be constructed by an argument similar to the Degree 2 case; we leave it as an exercise.

\noindent {\bf Exercise}: Construct the inverse map $\Hch^2(\cT,\cC^1) \rightarrow H^3_{DR}(X;\RR)$ and show that it is well-defined. \\

\noindent {\bf Degree 4}:  The isomorphism is constructed in a analogous method to the Degree 3 case and we leave it as an exercise to the reader.

\noindent {\bf Exercise}: Construct the isomorphism $H^4_{DR}(X;\RR) \cong \Hch^2(\cT,\cC^2)$.

\subsection{Intersection Pairing}

The intersection pairing on DeRham cohomology can also be expressed in terms of the Cech cohomology of the DeRham presheafs.  In particular, we can describe the following pairings
\begin{align*}
H^2_{DR}(X) \times H^2_{DR}(X) \rightarrow \RR \\
H^3_{DR}(X) \times H^1_{DR}(X) \rightarrow \RR \\
\end{align*}
Moreover, we can describe the pairing obtained by integrating a closed $p$-form over a closed $p$-dimensional submanifold.
\begin{align*}
H^2_{DR}(X) \times H_2(X;\ZZ) \rightarrow \RR \\
H^3_{DR}(X) \times H_3(X;ZZ) \rightarrow \RR \\
H^4_{DR}(X) \times H_4(X;\ZZ) \rightarrow \RR
\end{align*}

\begin{theorem}[Intersection Pairing]
Let $X$ be a trisected 4-manifold.
\begin{enumerate}
\item Let $\omega_1,\omega_2$ be a pair of closed 2-forms.  Suppose that under the DeRham-Cech isomorphism we have
\[ [\omega_1] \mapsto (\alpha_1,\alpha_2,\alpha_3) \qquad [\omega_2] \mapsto (\beta_1,\beta_2,\beta_3)\]
Then
\[\int_X \omega \wedge \mu = \int_{\Sigma} \alpha_1 \wedge \beta_2 = \int_{\Sigma} \alpha_2 \wedge \beta_3 = \int_{\Sigma} \alpha_3 \wedge \beta_1\]
\item Let $\mu$ be a closed 3-form and $\alpha$ be a closed 1-form.  Suppose that under the DeRham-Cech isomorphism we have that $[\mu] \mapsto [\beta]$.  Then
\[\int_X \mu \wedge \alpha = \int_{\Sigma} \beta \wedge \alpha|_{\Sigma}\]
\end{enumerate}
\end{theorem}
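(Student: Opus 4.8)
The plan is to prove both identities by the same two-step application of Stokes' theorem, using the closed decomposition $X = Z_1 \cup Z_2 \cup Z_3$ into the trisection sectors. Up to sets of measure zero the $Z_\lambda$ partition $X$, their pairwise intersections are the $3$-dimensional handlebodies $H_\lambda$, and their triple intersection is $\Sigma$, so $\int_X \eta = \sum_\lambda \int_{Z_\lambda} \eta$ for any $4$-form $\eta$. I treat part (2) first, as it is the cleaner model. Choose $2$-form primitives $\omega_\lambda$ of $\mu$ on $U_\lambda$ (possible since $H^3_{DR}(U_\lambda) = 0$); by the construction of the degree-$3$ isomorphism the class $[\beta]$ is represented by $\beta = \beta_1 + \beta_2 + \beta_3$ on $\Sigma$, where $\beta_\lambda$ is a $1$-form on $U_{\lambda-1}\cap U_\lambda$ with $d\beta_\lambda = \omega_\lambda - \omega_{\lambda-1}$. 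Because $\alpha$ is closed, $\mu \wedge \alpha = d(\omega_\lambda\wedge\alpha)$ on $Z_\lambda$, so Stokes' theorem gives $\int_X \mu\wedge\alpha = \sum_\lambda \int_{\partial Z_\lambda} \omega_\lambda\wedge\alpha$.

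Next I reorganize this boundary sum over the Heegaard pieces. Each $\partial Z_\lambda$ is $H_\lambda \cup_\Sigma H_{\lambda+1}$, and the face $H_\mu$ occurs once in $\partial Z_{\mu-1}$ and once in $\partial Z_\mu$, with opposite induced orientations. Orienting each $H_\mu$ so that $\partial H_\mu = \Sigma$, one checks that $H_\mu$ occurs in $\partial Z_\mu$ with this orientation and in $\partial Z_{\mu-1}$ with the reversed one; reindexing, the sum telescopes to $\sum_\lambda \int_{H_\lambda}(\omega_\lambda - \omega_{\lambda-1})\wedge\alpha = \sum_\lambda\int_{H_\lambda} d(\beta_\lambda\wedge\alpha)$, again using $d\alpha = 0$. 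A second application of Stokes' theorem, now on the $3$-manifolds $H_\lambda$ with $\partial H_\lambda = \Sigma$, yields $\sum_\lambda \int_\Sigma \beta_\lambda|_\Sigma\wedge\alpha|_\Sigma = \int_\Sigma \beta\wedge\alpha|_\Sigma$, which is part (2).

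For part (1) choose $1$-form primitives $A_\lambda$ of $\omega_1$ and $B_\lambda$ of $\omega_2$ on $U_\lambda$ (possible since $H^2_{DR}(U_\lambda) = 0$), so that $\alpha_\lambda = A_\lambda - A_{\lambda-1}$ and $\beta_\lambda = B_\lambda - B_{\lambda-1}$ represent the de Rham--Cech images. Running the same two Stokes steps (first $\omega_1\wedge\omega_2 = d(A_\lambda\wedge\omega_2)$ on $Z_\lambda$, which telescopes to $\sum_\lambda\int_{H_\lambda}\alpha_\lambda\wedge\omega_2$; then $\omega_2 = dB_\lambda$ on $H_\lambda$ with $d\alpha_\lambda = 0$, so $\alpha_\lambda\wedge\omega_2 = -d(\alpha_\lambda\wedge B_\lambda)$ and $\int_{H_\lambda}\alpha_\lambda\wedge\omega_2 = -\int_\Sigma \alpha_\lambda|_\Sigma \wedge B_\lambda|_\Sigma$) gives
\[ \int_X \omega_1\wedge\omega_2 = -\sum_\lambda \int_\Sigma \alpha_\lambda|_\Sigma \wedge B_\lambda|_\Sigma. \]
To finish I use two facts about $\Sigma$. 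First, $(\alpha_\lambda)$ is a Cech cocycle, i.e.\ $\alpha_1|_\Sigma + \alpha_2|_\Sigma + \alpha_3|_\Sigma = 0$; subtracting the vanishing integral $\int_\Sigma(\sum_\lambda \alpha_\lambda)\wedge B_{\lambda_0}|_\Sigma$ rewrites the sum as $-\sum_\lambda\int_\Sigma \alpha_\lambda|_\Sigma\wedge(B_\lambda - B_{\lambda_0})|_\Sigma$ for any fixed $\lambda_0$, and each $B_\lambda - B_{\lambda_0}$ is a signed sum of the $\beta_\mu$. Second, $M_\lambda = \text{Im}\left((\iota_\lambda)^*\colon H^1(H_\lambda)\to H^1(\Sigma)\right)$ is isotropic for the intersection pairing $\int_\Sigma(-)\wedge(-)$, since for classes pulled back from $H_\lambda$ the cup product factors through $H^2(H_\lambda) = 0$; as both $\alpha_\lambda|_\Sigma$ and $\beta_\lambda|_\Sigma$ lie in $M_\lambda$ (they are restrictions of forms closed near $H_\lambda$), the diagonal terms $\int_\Sigma \alpha_\lambda|_\Sigma\wedge\beta_\lambda|_\Sigma$ all vanish. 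These two cancellations collapse the sum to a single term of the shape $\int_\Sigma \alpha_\lambda|_\Sigma\wedge\beta_{\lambda+1}|_\Sigma$, and the three choices of $\lambda_0$ produce exactly the three expressions $\int_\Sigma \alpha_1\wedge\beta_2$, $\int_\Sigma\alpha_2\wedge\beta_3$, $\int_\Sigma\alpha_3\wedge\beta_1$ in the statement, which are therefore all equal to $\int_X\omega_1\wedge\omega_2$.

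The step I expect to be most delicate is not any individual computation but the orientation bookkeeping behind the telescoping: one must check that the common face $H_\lambda = Z_{\lambda-1}\cap Z_\lambda$ receives opposite boundary orientations from the two sectors meeting along it, and that the resulting induced orientation of $H_\lambda$ inside $\partial Z_\lambda$ is the same for every $\lambda$ -- this is what forces the reindexed sums to assemble as the differences $\omega_\lambda - \omega_{\lambda-1}$ (resp.\ $A_\lambda - A_{\lambda-1}$) that match the de Rham--Cech formulas, rather than as sums with mismatched signs. A secondary point worth a remark is the independence of the right-hand sides from the chosen primitives, which follows either from the already-established well-definedness of the de Rham--Cech isomorphism or directly from the isotropy of the $M_\lambda$ used above.
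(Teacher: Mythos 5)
Your proof is correct and follows exactly the route the paper intends: the statement is left there as an exercise with the hint ``Stokes's Theorem combined with the arguments in the previous subsection,'' and your two-step Stokes argument over the sectors $Z_\lambda$ and then the handlebodies $H_\lambda$, using the primitives from the degree~2 and degree~3 DeRham--Cech constructions, is precisely that. The two points you flag -- the orientation bookkeeping in the telescoping and the isotropy of $M_\lambda$ (equivalently $\int_\Sigma \alpha_\lambda\wedge\beta_\lambda = \int_{H_\lambda} d(\alpha_\lambda\wedge\beta_\lambda)=0$) used to kill the diagonal terms -- are the right ones, and your three choices of $\lambda_0$ correctly recover the three equal expressions in the statement.
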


\noindent {\bf Exercise:} Prove these statements (Hint: Use Stokes's Theorem combined with the arguments in the previous subsection) \\

To describe the integration pairing, we first fix some notation.
\begin{enumerate}
\item Let $\cK$ be an embedded, oriented closed surface in general position with respect to the trisection.  Let $\tau^{\cK}_{\lambda}$ denote the tangle $\cK \pitchfork H_{\lambda}$.  We orient $\tau_{\lambda}$ as follows: since $\cK$ is oriented, the intersection $F_{\lambda} = \cK \cap Z_{\lambda}$ is oriented.  The boundary $\del F_{\lambda}$ inherits an orientation from $F_{\lambda}$; the tangle $\tau^{\cK}_{\lambda}$ is a subset of this boundary and inherits an orientation.
\item Let $\cM$ be an embedded, oriented, closed hypersurface in general position with respect to the trisection.  In particular, the intersection $\cM \pitchfork \Sigma$ is a simple closed curve $\gamma_{\cM}$.
\end{enumerate}

\begin{theorem}[Integration Pairing]
Let $X$ be a trisected 4-manifold.
\begin{enumerate}
\item Let $\omega$ be a closed 2-form on $X$ that maps to $(\beta_1,\beta_2,\beta_3)$ under the DeRham-Cech isomorphism and let $\cK$ be an embedded, oriented closed surface.  Then
\[ \int_{\cK} \omega = \sum_{\lambda = 1,2,3} \int_{\tau^{\cK}_{\lambda}} \beta_{\lambda}\]
\item Let $\mu$ be a closed 3-form on $X$ that maps to $\beta \in H^1_{DR}(\Sigma)$ under the DeRham-Cech isomorphism and let $\cM$ be an embedded, oriented, closed hypersurface.  Then
\[ \int_{\cM} \mu = \int_{\gamma_{\cM}} \beta\]
\item Let $\Omega$ be a closed 4-form on $X$ that maps to $\omega \in H^2_{DR}(\Sigma)$ under the DeRham-Cech isomorphism.  Then
\[ \int_X \Omega = \int_{\Sigma} \omega\]
\end{enumerate}
\end{theorem}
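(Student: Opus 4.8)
The plan is to read all three identities directly off the cochain-level descriptions of the DeRham--Cech isomorphism constructed in the previous subsection and feed them into Stokes's theorem. In every case I would decompose the domain of integration along the closed sectors $Z_\lambda$ of the trisection; since all of the forms involved are closed and the inclusions $Z_\lambda\hookrightarrow U_\lambda$, $H_\lambda\hookrightarrow U_{\lambda-1}\cap U_\lambda$ and $\Sigma\hookrightarrow U_1\cap U_2\cap U_3$ are homotopy equivalences, passing from the open cover $\cT$ to the closed sectors and pages changes no integral. One then applies Stokes on each sector and telescopes the resulting boundary contributions using the coboundary relations built into the definition of the isomorphism. For parts (2) and (3) this has to be iterated: Stokes is applied a first time on the top-dimensional sectors and a second time on the $3$-dimensional pages, after which the remaining boundary terms live on the central surface $\Sigma$.

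For part (1): choose primitives $\alpha_\lambda$ for $\omega|_{U_\lambda}$, which exist since $H^2_{DR}(U_\lambda;\RR)=0$, so that by construction the Cech cocycle is $\beta_\lambda = \alpha_\lambda - \alpha_{\lambda-1}$ on $U_{\lambda-1}\cap U_\lambda$. Write $\cK = F_1\cup F_2\cup F_3$ with $F_\lambda = \cK\cap Z_\lambda$, so that, with the orientation convention of the statement, $\del F_\lambda = \tau^{\cK}_\lambda - \tau^{\cK}_{\lambda+1}$ as oriented $1$-manifolds. Then
\[\int_{\cK}\omega \;=\; \sum_\lambda \int_{F_\lambda} d\alpha_\lambda \;=\; \sum_\lambda \int_{\del F_\lambda}\alpha_\lambda \;=\; \sum_\lambda\left(\int_{\tau^{\cK}_\lambda}\alpha_\lambda - \int_{\tau^{\cK}_{\lambda+1}}\alpha_\lambda\right) \;=\; \sum_\lambda\int_{\tau^{\cK}_\lambda}\bigl(\alpha_\lambda - \alpha_{\lambda-1}\bigr) \;=\; \sum_\lambda\int_{\tau^{\cK}_\lambda}\beta_\lambda,\]
where the penultimate step reindexes the second sum and uses $\tau^{\cK}_\lambda\subset H_\lambda\subset U_{\lambda-1}\cap U_\lambda$, so that $\alpha_\lambda$, $\alpha_{\lambda-1}$ and $\beta_\lambda$ are all defined along $\tau^{\cK}_\lambda$.

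For part (2): choose primitive $2$-forms $\omega_\lambda$ for $\mu|_{U_\lambda}$ and then, using the acyclicity of the double intersections above degree $1$, primitive $1$-forms $\beta_\lambda$ for $\omega_\lambda - \omega_{\lambda-1}$ on $U_{\lambda-1}\cap U_\lambda$, so that the Cech class is represented by $\beta = \beta_1+\beta_2+\beta_3$ on $U_1\cap U_2\cap U_3\simeq\Sigma$. Writing $\cM = \bigcup_\lambda(\cM\cap Z_\lambda)$, a first application of Stokes over the $3$-manifolds $\cM\cap Z_\lambda$, followed by the same telescoping, reduces $\int_{\cM}\mu$ to $\sum_\lambda\int_{\cM\cap H_\lambda} d\beta_\lambda$, and a second application of Stokes over the surfaces $\cM\cap H_\lambda$, whose common boundary is $\cM\cap\Sigma=\gamma_{\cM}$, gives $\int_{\gamma_{\cM}}(\beta_1+\beta_2+\beta_3)=\int_{\gamma_{\cM}}\beta$. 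Part (3) is the same argument one degree higher: decompose $X = Z_1\cup Z_2\cup Z_3$, choose primitive $3$-forms $\gamma_\lambda$ for $\Omega|_{U_\lambda}$ and primitive $2$-forms $\delta_\lambda$ for $\gamma_\lambda-\gamma_{\lambda-1}$, with $\omega = \delta_1+\delta_2+\delta_3$ on $U_1\cap U_2\cap U_3$; Stokes over each $Z_\lambda$ produces integrals over $\del Z_\lambda = H_\lambda\cup H_{\lambda+1}$, telescoping reduces these to $\sum_\lambda\int_{H_\lambda}d\delta_\lambda$, and Stokes once more over the pages $H_\lambda$, with $\del H_\lambda=\Sigma$, yields $\int_{\Sigma}(\delta_1+\delta_2+\delta_3)=\int_\Sigma\omega$.

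The genuinely delicate point --- as always with Mayer--Vietoris-type patching --- will be the orientation bookkeeping. I must verify that the boundary orientations on $F_\lambda$, on $\cM\cap Z_\lambda$, and on $Z_\lambda$, together with the chosen orientations of the tangles $\tau^{\cK}_\lambda$, of the surfaces $\cM\cap H_\lambda$, and of the pages $H_\lambda$, interact with the cyclic structure of the trisection so that each telescoping sum collapses with coefficient $+1$ and so that the three pages $H_\lambda$ induce a common orientation on $\Sigma$; this is exactly what pins down the sign conventions used in the DeRham--Cech isomorphism. One also invokes the general-position hypotheses to guarantee that every intersection appearing above is a transverse submanifold with precisely the claimed boundary. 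Once the sign conventions are fixed, each of the three formulas is exactly the one- or two-step Stokes computation displayed above.
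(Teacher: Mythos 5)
Your argument is correct and is exactly the intended one: the paper leaves this theorem as an exercise with the hint ``use Stokes's Theorem combined with the arguments in the previous subsection,'' and your decomposition along the sectors $Z_\lambda$, the telescoping of primitives via $\beta_\lambda=\alpha_\lambda-\alpha_{\lambda-1}$ (and its degree-3 and degree-4 analogues), and the iterated Stokes applications down to $H_\lambda$ and $\Sigma$ reproduce precisely the cochain-level construction of the DeRham--Cech isomorphism given there. Your orientation conventions ($\del F_\lambda=\tau^{\cK}_\lambda-\tau^{\cK}_{\lambda+1}$, $\del Z_\lambda=H_\lambda\cup H_{\lambda+1}$, $\del H_\lambda=\Sigma$) are consistent with the paper's, so the flagged sign bookkeeping indeed collapses each telescoping sum with coefficient $+1$ as claimed.
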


\noindent {\bf Exercise:} Prove these statements (Hint: Again, use Stokes's Theorem).

\section{Complex Line Bundles}

\subsection{Algebraic Topology}
First, we recall some facts from algebraic topology.
\begin{enumerate}
\item The circle $S^1$ is a $K(\ZZ,1)$.  In particular, there is a 1-1 correspondence between classes in $H^1(X;\ZZ)$ and homotopy classes of maps $f: X \rightarrow S^1$.
\item The space $\CP^{\infty}$ is a $K(\ZZ,2)$.  In particular, there is a 1-1 correspondence between classes in $H^2(X;\ZZ)$ and homotopy classes of maps $f: X \rightarrow \CP^{\infty}$.  The cohomology ring of $\CP^{\infty}$ is $\ZZ[\alpha]$, where $\alpha$ has degree 2, and the identification between maps and cohomology classes is given by
\[f \leftrightarrow f^*(\alpha)\]
\item The space $\CP^{\infty}$ is the classifying space for $U(1)$ (equivalently $\CC$-line) bundles.  In particular, there is a 1-1 correspondence between $\CC$-line bundles on $X$, up to isomorphism, and homotopy classes of maps $f: X \rightarrow \CP^{\infty}$.  There is a {\it tautological line bundle} $E \rightarrow \CP^{\infty}$ and the correspondence between maps and $\CC$-bundles is given by
\[ f \leftrightarrow f^*(E)\]
\item The {\it $1^{\text{st}}$-Chern class} is a complete invariant of $\CC$-line bundles and connects (2) and (3) above.  In particular, for the tautological bundle $E$ on $\CP^{\infty}$ we have
\[c_1(E) = \alpha\]
Moreover, since Chern classes are characteristic, they are natural with respect to pullbacks and therefore
\[c_1(f^*(E)) = f^*(c_1(E)) = f^*(\alpha).\]
\end{enumerate}

\subsection{Chern classes of line bundles}

Using a trisection $\cT$ of $X$, we can explicitly see the equivalence
\[ \{ \text{$\CC$-bundles on X}\}/\sim \simeq \Hch^1(\cT,\cC^1) \cong H^2(X;\ZZ)\]

\noindent {\bf Line bundles to (1,1)-classes}: Take a line bundle $E$ on $X$.  Since each sector $Z_{\lambda}$ of a trisection is a 1-handlebody, we can choose a trivialization $s_{\lambda}$ of $E$ over $Z_{\lambda}$.  Up to homotopy, the potential choices of trivializations are in 1-1 correspondence with elements of $H^1(Z_{\lambda};\ZZ) \cong \ZZ^{k_{\lambda}}$.  Over the double intersection $H_{\lambda}$, we have two trivializations $s_{\lambda-1},s_{\lambda}$.  Taking their quotient, we obtain a map
\[g_{\lambda} \coloneqq \frac{s_{\lambda}}{s_{\lambda-1}} \rightarrow \CC^*\]
Composing this with the homotopy equivalence $\CC^* \simeq S^1$, the map $g_{\lambda}$ determines a homotopy class of maps from $H_{\lambda}$ to $S^1$.  In other words, the transition function $g_{\lambda}$ determines a unique element $\beta_{\lambda}$ of $H^1(H_{\lambda};\ZZ)$.  Moreover, since
\[g_1 g_2 g_3 = \frac{s_1}{s_3} \frac{s_2}{s_1} \frac{s_3}{s_2} = 1\]
the resulting triple $(\beta_1,\beta_2,\beta_3)$ is a Cech 1-cocycle in $\Cch^*(\cT,\cC^1)$.  Modifying the trivialization $s_{\lambda}$ by some element of $H^1(Z_{\lambda};\ZZ)$ changes the resulting cocycle by a Cech coboundary.  In particular, we obtain a well-defined element $c_1(E) \in \Hch^1(\cT,\cC^1)$.

\noindent {\bf (1,1)-classes to line bundles}:  Given a (1,1)-class $(\beta_1,\beta_2,\beta_3) \in  \Hch^1(\cT,\cC^1)$, we can represent $\beta_{\lambda} \in H^1(H_{\lambda};\ZZ)$ by a map $g_{\lambda}: H_{\lambda} \rightarrow S^1$.  Moreover, given the cocycle condition $\beta_1 + \beta_2 + \beta_3 = 0$ we can assume that $g_1g_2 g_3 = 1$.  In particular, the triple $\{g_1,g_2,g_3\}$ determines a triple of transition functions that allow us to construct a $\CC$-bundle over $X$.

\section{Almost-Complex Structures}

An {\it almost-complex structure} $J$ on $X$ is a fiberwise homomorphism $J: TX \rightarrow TX$ such that $J^2 = -I$.  This turns every fiber $T_xX$ into a complex vector space, where $J$ is multiplication by $i$.  Consequently, the almost-complex structure determines Chern classes $c_i(TX,J) \in H^{2i}(X;\ZZ)$.  The goal of this section is to describe almost-complex structures on the spine of a trisection.

\subsection{Field of complex tangencies}

Let $Y^3 \subset X^4$ be a smooth hypersurface and let $J$ be an almost-complex structure.  The {\it field of $J$complex tangencies} is defined to be
\[\xi \coloneq J(TY) \cap TY\]

\noindent {\bf Exercise} Show that $\xi$ has rank 2 at every point.  [Hint: $\xi_x$ is a $J$-complex line in $T_xX$].  In particular, $\xi$ is an {\it oriented} plane field.

\noindent {\bf Exercise} Let $\phi: X \rightarrow \RR$ be a function such that $Y = \phi^{-1}(0)$.  Show that the field of $J$-tangencies is the kernel of the 1-form $d^{\CC}\phi = d\phi(J-)$, restricted to $Y$.

\begin{proposition}
\label{prop:J-xi}
Let $Y$ be a 3-manifold.  Homotopy classes of almost-complex structures on $Y \times [0,1]$ are in 1-1 correspondence with homotopy classes of (coorientable) 2-plane fields on $Y$.
\end{proposition}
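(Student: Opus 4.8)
The plan is to set up the correspondence in both directions and then check it is well-defined on homotopy classes, which is essentially an application of obstruction theory. Recall first that a coorientable 2-plane field $\xi$ on $Y$ is equivalent to a nowhere-zero vector field $v$ on $Y$ (the cooriented normal), or dually a nowhere-zero 1-form; homotopy classes of such are the object on the right-hand side. On the left, $Y \times [0,1]$ deformation retracts to $Y$, so an almost-complex structure is determined up to homotopy by its restriction to $Y \times \{1/2\}$ together with the retraction data; the point is that $Y \times [0,1]$ is an open 4-manifold with the homotopy type of a 3-complex, and almost-complex structures on it are sections of a bundle with fiber $SO(4)/U(2) \cong S^2$.

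The main construction goes as follows. Given an almost-complex structure $J$ on $Y \times [0,1]$, restrict to the hypersurface $Y = Y \times \{1/2\}$ and form the field of $J$-complex tangencies $\xi = J(TY) \cap TY$. By the first exercise above, $\xi$ is a rank-$2$ oriented plane field on $Y$; orient its normal line using $J$ (the normal to $Y$ in $X$ and its image under $J$ span a complex line transverse to $\xi$, giving a coorientation). This produces the forward map. Conversely, given a cooriented 2-plane field $\xi$ on $Y$, pick a Riemannian metric and let $\nu$ be the oriented unit normal line to $\xi$ in $TY$; then $TY = \xi \oplus \nu$, and $T(Y\times[0,1]) = \xi \oplus \nu \oplus \langle \partial_t\rangle$. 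Define $J$ by rotating $\xi$ by its own orientation (this is an $SO(2)$, hence a complex structure on the rank-$2$ bundle $\xi$) and by sending the oriented unit normal of $\xi$ to $\partial_t$ and $\partial_t$ to minus that normal. One checks $J^2 = -I$. These two constructions are visibly inverse up to homotopy: starting from $\xi$, the field of complex tangencies of the resulting $J$ is again $\xi$.

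The step that requires care is showing both maps descend to homotopy classes and are mutually inverse there, i.e.\ that the extra choices (metric, the retraction $Y\times[0,1]\simeq Y$, the precise rotation) do not matter. For the forward direction, a homotopy of $J$ through almost-complex structures restricts to a homotopy of the plane fields $\xi_t$, using that the rank of $\xi$ is constant equal to $2$ throughout (again the first exercise). For the reverse direction, the space of metrics is convex hence contractible, so the choice of $\nu$ is unique up to contractible choice. The one genuinely substantive point is surjectivity: every $J$ on $Y\times[0,1]$ is homotopic to one of the standard form coming from its tangency plane field. Here I would argue that both sides are classified by the same homotopy-theoretic data — sections up to homotopy of an $S^2$-bundle over the $3$-complex $Y$ — and that the two constructions induce mutually inverse bijections on $[Y, S^2]$-valued obstruction classes, degree by degree over a CW structure on $Y$. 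The hardest part is keeping the coorientation/orientation bookkeeping consistent so that the two maps are honestly inverse rather than merely inverse up to sign; everything else is a retraction argument plus the rank-$2$ fact already established.
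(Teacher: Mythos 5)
Your forward and backward constructions are exactly the ones the paper uses (complex tangencies in one direction; metric, rotation of $\xi$, and $J(\sigma)=-\partial_t$ in the other), and the easy composition --- that the tangency field of the rebuilt $J$ is again $\xi$ --- is fine. The gap is in the step you yourself flag as the substantive one: showing that \emph{every} almost-complex structure on $Y\times[0,1]$ is homotopic to one of the standard form, equivalently that the other composition is the identity up to homotopy. You defer this to the assertion that ``both sides are classified by sections up to homotopy of an $S^2$-bundle over $Y$'' and that the constructions ``induce mutually inverse bijections on $[Y,S^2]$-valued obstruction classes, degree by degree.'' As written this is circular: identifying the two classifying objects --- the bundle of (metric-compatible, correctly oriented) almost-complex structures on $T(Y\times[0,1])$, with fiber $SO(4)/U(2)\cong S^2$, and the unit sphere bundle of $TY$ --- \emph{is} the content of the proposition, and comparing ``obstruction classes'' of sections of two a priori different $S^2$-bundles is meaningless until you exhibit a bundle map between them. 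To make your route honest you would need to (i) homotope an arbitrary $J$ to one orthogonal for a fixed metric (contractible choice, fine), and (ii) show that $J\mapsto J(\partial_t)$, projected to $TY$, is a fiberwise diffeomorphism from the twistor bundle over $Y$ to the unit tangent bundle of $Y$, and that your two constructions are the induced maps on sections. Neither step appears in the proposal.

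For comparison, the paper avoids all of this with a two-line explicit homotopy: given any $J$, set $\Lambda = TY\cap\langle\partial_t, J\partial_t\rangle$, pick a nonvanishing section $\sigma$ of $\Lambda$, write $J(\partial_t)=f\,\partial_t+g\,\sigma$ with $g>0$, and deform through $J_s(\partial_t)=sf\,\partial_t+g\,\sigma$ (keeping $J_s|_\xi=J$); at $s=0$ one lands on the standard model, at $s=1$ one has the original $J$. This is more elementary than the obstruction-theoretic framing and sidesteps the orientation/coorientation bookkeeping you were worried about. Your approach can be completed, but as it stands the surjectivity step is a gesture rather than an argument.
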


\begin{proof}
Let $J$ be an almost-complex structure on $Y \times [0,1]$ and let $\xi_t$ denote the field of $J$-tangencies along $Y \times \{t\}$.  It is immediately clear that $\{\xi_t\}$ is a homotopy of 2-plane fields.  Furthermore, let $J_s$ be a family of almost-complex structures and let $\xi_{s,t}$ denote the field of $J_s$-tangencies along $Y \times \{t\}$.  Again, this clearly gives a 2-parameter homotopy of plane fields on $Y$.

Now let $\xi$ be an oriented, coorientable 2-plane field and choose a fiberwise metric $g$ on $\xi$.  We can define an almost-complex structure $J: \xi \rightarrow \xi$ using the metric as follows.  Locally, we can choose an oriented, orthonormal frame $\{e_1,e_2\}$ and define
\[J(e_1) = e_2 \qquad J(e_2) = -e_1\]
and extend linearly.

\noindent {\bf Exercise} Show that, up to homotopy, this $J$ does not depend on the metric $g$ or the local orthonormal frame.

Next, let $\Lambda$ be an oriented line field that coorients $\xi$.  After choosing a metric $h$ on $\Lambda$, we obtain a unit-length section $\sigma$ of $\Lambda$ and can extend $J$ from $\xi$ to $TX$ by defining
\[J(\del_t) = \sigma \qquad J(\sigma) = - \del_t\]

\noindent {\bf Exercise} Show that, up to homotopy, this $J$ does not depend on the homotopy class of $J|_{\xi}$, the homotopy class of $\Lambda$, or the metric $h$.

Finally, we have to check that every $J$ on $Y \times [0,1]$ can be constructed in this way.  Choose some $J$ and define $E = \langle \del_t, J(\del_t) \rangle$ and $\Lambda = TY \cap E$.  Choose a nonvanishing section $\sigma$ of $\Lambda$.  Then
\[J(\del_t) = f \del_t + g \sigma\]
for some functions $f,g$.  By assumption $\{\del_t, J \del_t\}$ is an oriented basis for $E$ and therefore $g > 0$.  Since $J$ preserves $\xi$, we can define a family $J_s$ of almost-complex structures for $s \in [0,1]$ by defining
\[J_s|_{\xi} = J \qquad J_s(\del_t) = sf \del_t + g \sigma\]
After scaling the metric so that $|g \sigma| = 1$, we have that $J_0$ is almost-complex structure of the form constructed above and $J_1$ is our original $J$.
\end{proof}

\noindent {\bf Exercise}: $\Sigma \times D^2$ admits an almost-complex structure $J$ with $c_1(J) = 0$. [Hint: embed $\Sigma$ in $\CC^2$.]

\begin{lemma}
The spine of a trisection admits an almost-complex structure $J$.
\end{lemma}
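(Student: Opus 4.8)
The plan is to build the almost-complex structure on the spine $\Sigma \cup H_1 \cup H_2 \cup H_3$ by assembling it piece by piece, starting from the central surface and extending outward over each handlebody, using Proposition~\ref{prop:J-xi} to organize the extensions in terms of plane fields. First I would observe that the spine deformation retracts onto a neighborhood of $\Sigma$ in a controlled way: each handlebody $H_\lambda$ is obtained from a collar $\Sigma \times [0,1]$ by attaching $1$-handles (since $H_\lambda$ is a genus-$g$ handlebody), and the union of the three handlebodies along $\Sigma$ can be modeled so that a neighborhood of $\Sigma$ in the spine looks like $\Sigma$ times a cone on three points (a ``Y'' graph). So the first step is to fix an almost-complex structure near $\Sigma$: by the exercise immediately preceding the lemma, $\Sigma \times D^2$ admits an almost-complex structure $J_0$ with $c_1(J_0) = 0$, obtained by embedding $\Sigma$ in $\CC^2$ and taking the induced structure on a tubular neighborhood. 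This gives a starting $J$ on a neighborhood of $\Sigma$ inside the $4$-dimensional thickening, which we restrict to get the germ along $\Sigma$.

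Next I would extend $J$ over each handlebody $H_\lambda$ separately. Thicken $H_\lambda$ slightly to a $4$-manifold $H_\lambda \times [-\epsilon,\epsilon]$, or better, work with the collar $Y_\lambda \times [0,1]$ where $Y_\lambda = \partial Z_\lambda$. Since $H_\lambda$ is homotopy equivalent to a wedge of circles, and we are free to choose the plane field up to homotopy, the obstruction to extending a plane field from a neighborhood of $\partial H_\lambda$ (which contains $\Sigma$) over all of $H_\lambda$ lies in cohomology groups $H^{k+1}(H_\lambda, \partial H_\lambda; \pi_k(\text{fiber}))$ where the fiber is the space of coorientable $2$-plane fields on a $3$-manifold, i.e.\ essentially $S^2$ (cooriented $2$-planes in $\R^3$). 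Because $H_\lambda \simeq \bigvee_{k_\lambda} S^1$ has cohomological dimension $1$, and the relevant relative groups vanish in the degrees where $\pi_k(S^2)$ could obstruct (the first potential obstruction sits in $H^3(H_\lambda,\partial;\pi_2 S^2) = H^3(H_\lambda, \partial; \Z)$, which by Lefschetz duality is $H_0(H_\lambda;\Z) = \Z$ — so this one must actually be dealt with, not waved away), one reduces to checking that the Euler-class-type obstruction vanishes. Here I would invoke that $Z_\lambda = \natural_{k_\lambda} S^1 \times B^3$ is Stein (stated in the introduction), hence carries an actual almost-complex structure compatible with a field of complex tangencies on its boundary; restricting that to the spine piece gives a $J$ on $H_\lambda$ agreeing with a standard model. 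The point is that each sector \emph{individually} bounds an almost-complex (indeed Stein) $4$-manifold, so there is no obstruction to a $J$ on $H_\lambda$; the only real content is matching the three of them along $\Sigma$.

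The main obstacle — and the step I expect to require the most care — is the matching along $\Sigma$: the three almost-complex structures $J_1, J_2, J_3$ on $H_1, H_2, H_3$ each induce a field of complex tangencies and hence, via the construction in Proposition~\ref{prop:J-xi}, a $2$-plane field along $\Sigma$ (together with a coorientation line field), and these must be made to agree simultaneously with the fixed $J_0$ coming from $\Sigma \subset \CC^2$. Since homotopy classes of coorientable $2$-plane fields on the surface-cross-interval pieces near $\Sigma$ are controlled and there is enough flexibility (the $H^1(\Sigma)$-worth of choices in trivializing, plus homotoping $J_\lambda$ rel a smaller neighborhood), one arranges all three to restrict to the same plane field near $\Sigma$; the obstruction to a common coorientation is again a class in $H^2(\Sigma;\Z) \cong \Z$, which one kills by the freedom in choosing the Stein structures (or by a direct surgery on the line field). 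I would finish by citing Proposition~\ref{prop:J-xi} to convert the matched plane fields on the $Y_\lambda$-collars back into honest almost-complex structures on $H_\lambda \times [0,1]$ that glue, along $\Sigma \times D^2$, to a single $J$ on the spine. The whole argument is essentially obstruction theory over a $2$-complex, where the only nonzero obstruction group is $H^2$ of a surface, and that obstruction is absorbed by the Stein (hence almost-complex) fillability of each individual sector.
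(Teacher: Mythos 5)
There is a genuine gap, and it sits exactly where you said the most care would be needed: the ``matching along $\Sigma$'' step is asserted rather than proved, and in your setup it carries all of the content of the lemma. Your route takes three almost-complex structures $J_1,J_2,J_3$ coming from Stein structures on the sectors $Z_\lambda$ and then claims one can homotope them so that all three, together with the fixed $J_0$ from $\Sigma\subset\CC^2$, agree on a neighborhood of $\Sigma$, with the leftover coorientation obstruction in $H^2(\Sigma;\ZZ)$ ``killed by the freedom in choosing the Stein structures.'' Nothing in the proposal substantiates this: a Stein structure on $Z_\lambda=\natural_{k_\lambda}S^1\times B^3$ induces a specific plane field on $Y_\lambda=\del Z_\lambda$, and there is no argument that its germ along $\Sigma$ can be made to coincide (as a cooriented plane field, simultaneously for all three sectors) with the germ of the field of $J_0$-complex tangencies. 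Moreover, agreeing up to homotopy of plane fields on $\Sigma$ is not enough to glue almost-complex structures on overlapping pieces of the spine; you need agreement of the structures (or at least of the cooriented plane fields, via Proposition~\ref{prop:J-xi}) on an actual neighborhood. So the Stein fillability of each sector, which you use to dispose of the $\ZZ$-valued obstruction $H^3(H_\lambda,\del H_\lambda;\pi_2S^2)\cong\ZZ$, does not address the question the lemma actually poses, namely whether the \emph{given} structure near $\Sigma$ extends over each $H_\lambda$.

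The fix is to never create a matching problem in the first place, and you already have all the ingredients. Fix $J_0$ near $\Sigma$ with $c_1(J_0)=0$ (your first paragraph, same as the paper). Its field of complex tangencies gives a cooriented $2$-plane field $\xi_\lambda$ defined near $\Sigma=\del H_\lambda$ inside each handlebody, and the unique obstruction to extending $\xi_\lambda$ over $H_\lambda$ is exactly the $\ZZ$-valued class you identified, which is computed by the Euler number $\langle e(\xi_\lambda),[\Sigma]\rangle$. A direct argument (take a section $\sigma$ of $\xi_\lambda$ and a normal vector field $\nu$ to $H_\lambda$; the zeros of $\sigma$ are the zeros of the determinant section built from $\nu,\sigma$) shows $\langle e(\xi_\lambda),[\Sigma]\rangle=\langle c_1(J_0),[\Sigma]\rangle=0$, so $\xi_\lambda$ extends across $H_\lambda$, and Proposition~\ref{prop:J-xi} converts the extended plane field back into an almost-complex structure on a neighborhood of $H_\lambda$ agreeing with $J_0$ near $\Sigma$. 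Since all three extensions share the same germ along $\Sigma$, they glue to a $J$ on the whole spine with no further matching. In short: replace the appeal to Stein structures on the sectors by the Euler-number computation $\langle e(\xi_\lambda),[\Sigma]\rangle=\langle c_1(J_0),[\Sigma]\rangle=0$, and the rest of your obstruction-theoretic skeleton becomes the paper's proof.
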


\begin{proof}
By the previous exercise, we can choose some $J$ on a tubular neighborhood of the central surface $\Sigma$.  The remaining task is to extend it across each handlebody $H_{\lambda}$.  The almost-complex structure $J$ determines a hyperplane field $\xi_{\lambda}$ in a neighborhood of $\del H_{\lambda} = \Sigma$.  

\noindent {\bf Exercise}: Show that $\langle e(\xi_{\lambda}), [\Sigma] \rangle =\langle c_1(J), [\Sigma] \rangle =0$.  [Hint: choose a section $\sigma$ of $\xi_{\lambda}$ and a normal vector field $\nu$ to $H_{\lambda}$.  Then $\text{det}(\nu,\sigma) = 0$ precisely where $\sigma = 0$]

Consequently, it is possible to extend $\xi_{\lambda}$ across $H_{\lambda}$ and by Proposition \ref{prop:J-xi}, this determines a homotopy class of $J$ in a neighborhood of $H_{\lambda}$.
\end{proof}

\subsection{$1^{\text{st}}$-Chern class of $J$}.  Given some $J$ on the spine of a trisection, we can construct a 1-complex $C_J$ in the spine that represents the Poincare dual to $c_1(TX,J)$.

The central surface $\Sigma$ is canonically framed.  In particular, we can choose coordinates $(s,t)$ on $D^2$ such that pulling back the coordinates by the projection
\[\pi: \nu(\Sigma) \cong \Sigma \times D^2 \rightarrow D^2\]
we have that
\begin{align*}
\Sigma &= \pi^{-1}(0) & H_2 &= \pi^{-1}(0,t) \text{ for $t \geq 0$} \\
H_1 &= \pi^{-1}(s,0) \text{ for $s \leq 0$} & H_3 &= \pi^{-1}(-x,x) \text{ for $x \geq 0$}
\end{align*}

Consider the {\it conormal sequence} for the central surface $\Sigma$:
\[0 \rightarrow N^* \Sigma \rightarrow T^*X \rightarrow T^*\Sigma \rightarrow 0\]
A {\it coframing} of $\Sigma$ is a trivialization of its conormal bundle.  Since $N^*\Sigma$ is an $\RR^2$-bundle, a coframing is determined by a single, nowhere-vanishing section.  Moreover, it is clear from the conormal sequence that such a section is given by a nowhere-vanishing 1-form whose restriction to $\Sigma$ is identically 0.  An almost-complex structure $J$ determines a dual almost-complex structure $J^t: T^*X \rightarrow T^*X$.  Inserting this, we get a (nonexact) sequence
\[ \xymatrix{
N^*\Sigma \ar[r] & T^*X \ar[r]^-{J^t} & T^*X \ar[r] & T^*\Sigma
}\]
Given a section $\alpha$ of $N^*\Sigma$, we can push it through this sequence to get a 1-form $\widetilde{\alpha}$ on $\Sigma$, defined to be
\[\widetilde{\alpha} = \alpha(J-)|_{\Sigma}\]

\noindent {\bf Exercise}. A {\it complex point} of $\Sigma$ is a point $x \in \Sigma$ such that $J(T_x\Sigma) = T_x \Sigma$.  Show that $\widetilde{\alpha}$ vanishes at precisely the complex points of $\Sigma$.

\noindent {\bf Exercise}.  By a $C^{\infty}$-small perturbation of $\Sigma$, we can assume that $\Sigma$ has finitely many complex points [Hint: What are the dimensions of the Grassmanians $\text{Gr}_{\RR}(2,4)$ and $\text{Gr}_{\CC}(1,2)$?]

Recall the normal coordinates $(s,t)$ on $\Sigma \times D^2$.  Then the pair $ds, dt$ of 1-forms gives a coframing of $\Sigma$.  Define
\[\beta_1 \coloneqq \widetilde{ds} \qquad \beta_2 \coloneqq \widetilde{dt} \qquad \beta_3 = -\widetilde{ds} - \widetilde{dt}\]

\noindent {\bf Exercise} Show that $\beta_1 \wedge \beta_2 \neq 0$, except at the complex points of $\Sigma$.  In particular, $\beta_1$ vanishes at $x \in \Sigma$ if and only if $\beta_2$ vanishes at $x$.

\noindent {\bf Exercise} Suppose that $\beta_1,\beta_2$, viewed as section of $T^*\Sigma$, are transverse to the 0-section.  Show that at each complex point $x \in \Sigma$, the indices of the vanishing of $\beta_1$ and $\beta_2$ at $x$ agree.

\noindent {\bf Exercise} Show that $\beta_{\lambda}$ extends to a 1-form on the handlebody $H_{\lambda}$ of the trisection such that $\text{ker}(\beta_{\lambda})$ is the field of $J$-complex tangencies along $H_{\lambda}$.

Choose vector fields $\{v_1,v_2\}$ on $\Sigma$ such that
\begin{align*}
\beta_1(v_1) &= 0 & \beta_2(v_1) = \beta_1(v_2) & \geq 0 \\
\beta_2(v_2) &= 0
\end{align*}
and set $v_3 = -v_1 - v_2 \in \text{ker}(\beta_3)$.  Since $v_{\lambda} \in \text{ker}(\beta_{\lambda})$, we can extend $v_{\lambda}$ to a section of $\xi_{\lambda}$ over $H_{\lambda}$.

For notational purposes, let $\nu_{\lambda}$ be a normal vector fields to $H_{\lambda}$ such that near $\Sigma$, we have
\[u_1 = \del_s \qquad u_2 = \del_t \qquad u_3 = -\del_s - \del_t\]

{\bf Exercise} Show that the pairs
\[ \{u_1,v_1\} \qquad \{ u_2,v_2\} \qquad \{u_3,v_3\}\]
determine the same section of $\text{det}(TX,J)$ over $\Sigma$.

\begin{proposition}
Let $J$ be an almost-complex structure on the spine of a trisection $\cT$ of $X$.  Choose vector fields $\{v_{\lambda} \subset \xi_{\lambda}\}$ as above and let $\tau_{\lambda} = v_{\lambda}^{-1}(0)$.  The 1-complex
\[C_J = \tau_1 \cup \tau_2 \cup \tau_3\]
is the intersection of $PD(c_1(J))$ with the spine of the trisection $\cT$.
\end{proposition}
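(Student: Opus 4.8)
The plan is to realize $c_1(J)$ as the first Chern class of the determinant line bundle $\det(TX,J) = \Lambda^2_{\CC}(TX,J)$ and to represent its Poincar\'e dual by the zero locus of an explicit section assembled from the vector fields $v_\lambda$.

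First I would settle the bundle theory. The bundle $\det(TX,J) = \Lambda^2_{\CC}(TX|_S,J)$ is defined only over the spine $S = \Sigma\cup H_1\cup H_2\cup H_3$. However, decomposing $X$ into a closed neighborhood of $S$ together with slightly shrunk copies of $Z_1,Z_2,Z_3$ (glued along $\del Z_1\sqcup\del Z_2\sqcup\del Z_3$) and running Mayer--Vietoris shows that the restriction $H^2(X;\ZZ)\to H^2(S;\ZZ)$ is an isomorphism: the only obstruction would be the image of $H^1(\del Z_1\sqcup\del Z_2\sqcup\del Z_3)$ under the connecting map, and that vanishes because each $H^1(Z_\lambda)\to H^1(\del Z_\lambda)$ is already surjective, $Z_\lambda$ being a $1$-handlebody. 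Hence $\det(TX,J)$ extends to a complex line bundle $\cL$ over all of $X$, unique up to isomorphism, with $c_1(\cL) = c_1(J)$.

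Next I would assemble the section over the spine. Over $H_\lambda$ the normal vector field $u_\lambda$ spans the complex line $E_\lambda = \langle u_\lambda, Ju_\lambda\rangle$, which is complementary to $\xi_\lambda$ in $TX|_{H_\lambda}$; since $v_\lambda$ is a section of $\xi_\lambda$, the wedge $\sigma_\lambda := u_\lambda\wedge_{\CC} v_\lambda$ is a section of $\det(TX,J)|_{H_\lambda}$ that vanishes exactly where $v_\lambda$ does, i.e.\ $\sigma_\lambda^{-1}(0) = \tau_\lambda$. By the exercise immediately preceding the proposition, the three pairs $\{u_\lambda,v_\lambda\}$ determine the same section of $\det(TX,J)$ over $\Sigma$, so $\sigma_1,\sigma_2,\sigma_3$ glue to a single section $\sigma$ of $\det(TX,J)$ over $S$ with zero locus $\sigma^{-1}(0) = \tau_1\cup\tau_2\cup\tau_3 = C_J$, the complex points of $\Sigma$ appearing among its zeros. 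Invoking the transversality hypotheses on $\beta_1,\beta_2$ from the earlier exercises, I would arrange that $\sigma$ is transverse to the zero section.

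Finally, extend $\sigma$ to a section $\hat\sigma$ of $\cL$ over $X$ (sections always extend from subcomplexes), then perturb $\hat\sigma$ away from $S$, keeping it fixed on $S$, until it is everywhere transverse to the zero section; this is possible since transversality of $\sigma$ on $S$ persists on a neighborhood of $S$. Then $\cK := \hat\sigma^{-1}(0)$ is a closed embedded surface, naturally oriented by $\cL$ and the orientation of $X$, with $[\cK] = PD(c_1(\cL)) = PD(c_1(J))$; and since $\hat\sigma|_S = \sigma$, we get $\cK\cap S = \sigma^{-1}(0) = C_J$ on the nose. I expect the one genuinely delicate point to be the gluing over $\Sigma$: one must verify that the three local sections $u_\lambda\wedge_{\CC}v_\lambda$ really do agree there, in particular that the orientations and the indices of the vanishing of the $v_\lambda$ match across the central surface --- this is precisely what the earlier exercises on the indices of $\beta_1$ and $\beta_2$ at the complex points are designed to supply, and carrying that orientation bookkeeping through is the substance of the argument; the Mayer--Vietoris identification, the transversal extension across $X$, and the fact that a transverse zero set of a section represents $PD(c_1)$ are all routine.
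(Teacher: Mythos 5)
Your core construction is exactly the paper's proof: over each $H_\lambda$ the bivector $u_\lambda \wedge v_\lambda$ gives a section of $\det(TX,J)$ vanishing precisely along $\tau_\lambda$ (since $u_\lambda$ is a nonvanishing normal and the complex line it spans meets $\xi_\lambda$ only in $0$), and the preceding exercise glues the three sections into one section over the spine with zero set $C_J$. The paper stops there; everything you add beyond that is an attempt to upgrade the statement to a genuine closed surface in $X$ representing $PD(c_1(J))$ and meeting the spine in $C_J$.

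That extra step contains a genuine error. Your Mayer--Vietoris argument only proves \emph{injectivity} of the restriction $H^2(X;\ZZ)\to H^2(S;\ZZ)$: killing the connecting map $\bigoplus_\lambda H^1(\del Z_\lambda)\to H^2(X)$ shows $H^2(X)$ injects into $H^2(S)\oplus\bigoplus_\lambda H^2(Z_\lambda)=H^2(S)$, but surjectivity would require the further restriction $H^2(S)\to\bigoplus_\lambda H^2(Y_\lambda)$ to vanish, and it does not in general. Concretely, for the genus-one trisection of $S^1\times S^3$ one has $H^2(X)=0$ while $H^2(S)\neq 0$, so $H^2(X)\to H^2(S)$ is not an isomorphism and a line bundle on the spine need not extend over $X$. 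For the bundle at hand the obstruction is explicit: since $H^2(Z_\lambda)=0$, the bundle $\det(TX|_S,J)$ extends over $Z_\lambda$ if and only if its restriction to $Y_\lambda=\del Z_\lambda$ is trivial, i.e.\ $e(\widehat{\xi}_\lambda)=0$; this is exactly the condition of Corollary \ref{cor:Spinc-trivial-euler} characterizing when $J$ determines a $\SpinC$-structure on $X$. So your construction of the global surface $\cK$ is valid only under that hypothesis (which is also what makes $c_1(J)\in H^2(X)$, and hence the statement itself, literally meaningful); either add it explicitly, or stay on the spine as the paper does and interpret the proposition as identifying the zero locus of a section of the determinant bundle over the spine. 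The transversality, extension-of-sections, and index bookkeeping you describe are fine once that point is repaired.
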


\begin{proof}
The bivector $u_{\lambda} \wedge v_{\lambda}$ determines a section of the determinant line bundle over $H_{\lambda}$.  The vector $u_{\lambda}$ is everywhere normal to $H_{\lambda}$ and nonvanishing, while $v_{\lambda}$ is tangent and vanishes along $\tau_{\lambda}$.  By the previous exercise, we obtain a section of the determinant bundle on the entire spine that vanishes precisely along the 1-complex $C_J$.  
\end{proof}

\vfill
\pagebreak

\section{$\SpinC$-structures}

A standard interpretation of a spin structure on a manifold $X$ is a trivialization of $TX$ over the 1-skeleton that extends across the 2-skeleton.  A similar interpretation of $\SpinC$-structures, due to Gompf, is an almost-complex structure over the 2-skeleton that extends across the 3-skeleton.

\subsection{Handle decompositions}
Every trisection $\cT$ of $X$ determines an {\it inside-out} handle decomposition as follows.  
\begin{enumerate}
\item Start with a neighborhood $\nu(\Sigma)$ of the central surface.  This is diffeomorphic to $\Sigma \times D^2$ and can be built in the standard way using a 0-handle, $2g$ 1-handles, and a 2-handle.  The boundary of this neighborhood is $\Sigma \times S^1$.  
\item Next, attach a neighborhood $\nu(H_{\lambda})$ of each 3-dimensional piece of the trisection.  The solid handlebody $H_{\lambda}$ is build from a single 0-handle and $g$ 1-handles.  Upside down, this becomes $g$ 2-handles and a single 3-handle.  Fix some distinct angular points $\theta_1,\theta_2,\theta_3 \in S^1$ in positive cyclic order.  Then attaching $\nu(H_{\lambda})$ is equivalent to the following.  Attach $g$ 2-handles along a cut system of curves on $\Sigma \times \{\theta_{\lambda}\}$ with surface framing.  After this surgery, the surface $\Sigma \times \{\theta_{\lambda}\}$ is now an essential 2-sphere and the 3-handle is attached along this 2-sphere.  The resulting boundary of the 4-manifold has three components $Y_1, Y_2, Y_3$ with $Y_3 \cong \#_{k_i} S^1 \times S^2$.
\item Finally, attach the 4-dimensional sectors.  These are 4-dimensional 1-handlebodies; upside down they consist of $k_i$ 3-handles and a single 4-handle.  The 3-handles are attached along the essential spheres in $\#_{k_i} S^1 \times S^2$.  The resulting boundary is three copies of $S^3$, which is where the 4-handles are attached.
\end{enumerate}
The {\it outside-in} handle decomposition determined by $\cT$ is the handle decomposition obtained by turning the inside-out handle decomposition upside down.

\subsection{Spin structures}

A standard interpretation of a spin structure on a manifold $X$ is a trivialization of $TX$ over the 1-skeleton that extends across the 2-skeleton.  Now, consider the inside-out handle decomposition of $X$ determined by a trisection $\cT$.  The 1-skeleton of $X$ is contained in the 1-skeleton of $\nu(\Sigma)$.  Thus, every spin structure of $X$ restricts to a spin structure on $\nu(\Sigma)$; moreover, since spin structures are stable, every spin structure of $X$ restricts to a spin structure on the central surface $\Sigma$.  

Recall that there exist two spin structures on $S^1$ and exactly one extends across $D^2$.  The spin structures on a closed, oriented surface $\Sigma$ are classified by maps
\[q: H_1(\Sigma; \ZZ/2\ZZ) \rightarrow \ZZ/2,\]
where $q(\gamma) = 0$ if the spin structure, restricted to a curve representing $\gamma$, is the spin structure that extends across the disk.  This map is a quadratic enhancement of the intersection form on $H_1(\Sigma)$; in particular, it satisfies the relation
\begin{equation}
\label{eq:quad-enhance}
q(x + y) = q(x) + q(y) + \langle x,y \rangle \, \text{mod $2$}.
\end{equation}
Let $\alphas = \{\alpha_i\}$ be a cut system of curves on $\Sigma$.  We say that $q(\alphas) = 0$ if $q(\alpha_i) = 0$ for every $\alpha_i \in \alphas$. Note that by the relation in Equation \ref{eq:quad-enhance}, if $q(\alphas) = 0$, then for every cut system $\alphas'$ obtained by handesliding some curves in $\alphas$, we also have $q(\alphas') = 0$.

\begin{proposition}
Let $\cT$ be a trisection of $X$ with trisection diagram $(\Sigma, \alphas,\betas,\gammas)$.  Then $X$ admits a spin structure if and only if there exists a quadratic enhancement $q: H_1(\Sigma;\ZZ/2\ZZ) \rightarrow \ZZ/2\ZZ$ such that
\[q(\alphas) = q(\betas) = q(\gammas) = 0.\]
Moreover, the set of spin structures is in 1-1 correspondence with such quadratic enhancements.
\end{proposition}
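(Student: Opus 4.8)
The plan is to reduce the statement to the handle-theoretic description of spin structures via the inside-out handle decomposition, and then track exactly which choices survive the passage to the trisection diagram. First I would recall, from the discussion preceding the proposition, that a spin structure on $X$ is a trivialization of $TX$ over the 1-skeleton extending over the 2-skeleton, and that in the inside-out handle decomposition the 1-skeleton of $X$ lies inside the 1-skeleton of $\nu(\Sigma)$. Thus restriction gives a map $\{\text{spin structures on }X\} \to \{\text{spin structures on }\Sigma\}$, and the latter set is parametrized by quadratic enhancements $q$ of the mod-2 intersection form. The content of the proposition is the identification of the image of this map with the set of $q$ satisfying $q(\alphas) = q(\betas) = q(\gammas) = 0$, together with injectivity.

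Next I would analyze the obstruction to extending a spin structure over the higher-index handles, stage by stage, following the three-step description of the inside-out decomposition. A spin structure on $\nu(\Sigma) \simeq \Sigma \times D^2$ is the pullback of one on $\Sigma$ (the $D^2$ direction contributes nothing since the disk is simply connected). In stage (2) we attach, for each $\lambda$, $g$ two-handles along a cut system on $\Sigma \times \{\theta_\lambda\}$ with surface framing, followed by a 3-handle. The 3-handle, being attached along a 2-sphere, imposes no condition (spin structures always extend over handles of index $\ge 3$, since $\pi_1$ and $\pi_2$ of the relevant spheres/attaching regions are irrelevant to spin). The 2-handles are the crucial ones: a 2-handle attached along a curve $\gamma$ with a given framing extends the spin structure precisely when the spin structure restricted to $\gamma$, together with the framing data, is the one bounding a disk — and because the framing used is the surface framing, this condition is exactly $q(\gamma) = 0$ in the normalization fixed before the proposition. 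Running this over the $\alphas$, $\betas$, $\gammas$ cut systems at the three angles $\theta_1, \theta_2, \theta_3$ gives the three conditions $q(\alphas) = q(\betas) = q(\gammas) = 0$. Stage (3) attaches only 3- and 4-handles, hence imposes nothing further. I would also note, using the relation in Equation~\ref{eq:quad-enhance}, that the condition $q(\alphas) = 0$ is independent of the choice of cut system within its handleslide-equivalence class, so the three conditions are well-defined properties of the trisection (not of the particular diagram), which is needed for the statement to make sense.

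For the bijection claim I would argue that the restriction map is injective: distinct spin structures on $X$ restrict to distinct spin structures on $\Sigma$, because $H^1(X;\ZZ/2) \to H^1(\Sigma;\ZZ/2)$ is injective (the 1-skeleton of $X$ sits in $\Sigma$, so any class vanishing on $\Sigma$ vanishes on the 1-skeleton of $X$, hence is zero — here one uses that spin structures, when nonempty, form a torsor over $H^1(-;\ZZ/2)$, and the difference of two spin structures agreeing on $\Sigma$ is a class pulled back trivially). Surjectivity onto the prescribed subset is exactly the stagewise extension argument above: given $q$ with $q(\alphas)=q(\betas)=q(\gammas)=0$, the corresponding spin structure on $\Sigma$ pulls back to $\nu(\Sigma)$ and then extends successively over every handle of the inside-out decomposition, producing a spin structure on $X$ restricting to $q$. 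The main obstacle I anticipate is pinning down precisely the claim that "2-handle attached with surface framing extends the spin structure iff $q(\gamma)=0$" — this requires being careful about the normalization of $q$ (which framing is declared to give $q=0$) and about the fact that the relevant framings in stages (2) are genuinely the surface framings of the trisection diagram; everything else is a standard obstruction-theoretic bookkeeping over the handle decomposition.
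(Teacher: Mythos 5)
Your proposal is correct and follows essentially the same route as the paper: use the inside-out handle decomposition, observe that a quadratic enhancement is a trivialization of $TX$ over the 1-skeleton (which lies in $\nu(\Sigma)$), and note that the only nontrivial extension conditions come from the $3g$ two-handles attached along the $\alphas,\betas,\gammas$ curves with surface framing, giving exactly $q(\alphas)=q(\betas)=q(\gammas)=0$. Your additional argument for injectivity of the restriction map (via $H^1(X;\ZZ/2\ZZ)\hookrightarrow H^1(\Sigma;\ZZ/2\ZZ)$ and the torsor structure) is a sound way to make explicit the 1--1 correspondence that the paper's proof leaves implicit.
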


\begin{proof}
Each $q$ corresponds to a spin structure on $\Sigma$ and therefore a trivialization of $TX$ over its 1-skeleton.  In the inside-out handle decomposition, we have $3g + 1$ 2-handles.  One 2-handle corresponds to the 2-handle of $\Sigma$; by assumption the trivialization extends over this handle.  The remaining 2-handles are attached along the curves of $\alphas,\betas,\gammas$ with surface framing.  Consequently, the trivialization of $TX$ extends across such a handle if and only if the spin structure, restricted to the attaching circle, is the spin structure on $S^1$ that extends across the disk.
\end{proof}

\subsection{Lutz twists}  A Lutz twist is a method for modifying a 2-plane field $\xi$ along an embedded curve $\gamma$.

Fix a metric and orthonormal framing of $TH_{\lambda}$.  Let $\xi$ be a 2-plane field on $H_{\lambda}$.  Then $\xi$ determines a map $\psi: H_{\lambda} \rightarrow S^2$, by sending the unit normal vector to $\xi$ to its direction in $\RR^3$ using the framing of $TH_{\lambda}$.  Now let $\gamma$ be an embedded curve in $H_{\lambda}$.  The image $\psi(\gamma)$ is a closed loop $S^2$, which is contractible and therefore this path is homotopic to a constant path at the north pole.  Consequently, we can homotope $\xi$ and assume that $\psi(\gamma)$ is the constant map to the North pole.  Geometrically, this means that tangent vector $\gamma'$ is perpendicular to $\xi$ at every point along $\gamma$.

\begin{definition}
A {\it Lutz twist} of $\xi$ consists of the following operation.  Choose a framed neighborhood of $\gamma$, with coordinates $(r,\theta,t)$.  Assume that $\xi = \text{ker}(dt)$.  Now, choose smooth functions $f,g$ such that
\begin{enumerate}
\item $f: [0,2 \epsilon] \rightarrow \RR$, that is identically 0 near the endpoints and nonnegative.
\item $g:[0,2 \epsilon] \rightarrow \RR$, that is increasing; identically -1 near 0; identically 0 near $\epsilon$; and identically $1$ near $2 \epsilon$.
\end{enumerate}
Replace $\xi$ with
\[\widehat{\xi} = \text{ker}(g dt + f d\theta)\]
\end{definition}

{\bf Exercises}:

\begin{enumerate}
\item Show that applying two Lutz twists along $\gamma$ is homotopic to the identity.
\item We have described a {\it left-handed} Lutz twist -- i.e. the planes make a single left-handed turn along every diameter of the normal disk to $\gamma$.  We could alternatively do a {\it right-handed} Lutz twist by choosing $f$ to be nonpositive.  Show that left-handed and right-handed Lutz twists result in homotopic plane fields.
\end{enumerate}

A Lutz twist changes the relative Euler class of the plane field $\xi_{\lambda}$.  Let $\tau$ denote a fixed trivialization of $\xi_{\lambda}$ along $\Sigma$ and define the relative Euler class $e(\xi_{\lambda},\tau) \in H^2(H_{\lambda}, \Sigma) \cong H_1(H_{\lambda})$.

\begin{lemma}
For a Lutz twist along $\gamma$, the relative Euler classes satisfy
\[e(\xi,\tau) - e(\widehat{\xi},\tau) = 2 [\gamma] \in H_1(H_{\lambda})\]
\end{lemma}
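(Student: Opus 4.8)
The plan is to realize the relative Euler class as the oriented zero set of a section and then to isolate the zeros that the Lutz twist introduces inside the twisting region.

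\textbf{Reduction to a disk.} As $e(\xi_\lambda,\tau)$ was introduced above, for an oriented rank-two bundle $E\to H_\lambda$ with a nonvanishing section over $\partial H_\lambda=\Sigma$ in the class $\tau$, the relative Euler class $e(E,\tau)\in H^2(H_\lambda,\Sigma)\cong H_1(H_\lambda)$ is Lefschetz-dual to the oriented $1$-manifold $Z(s)=s^{-1}(0)$ for any section $s$ transverse to the zero section restricting to $\tau$ over $\Sigma$. I would use that the twist is supported in the framed neighbourhood $N:=N(\gamma)\cong S^1\times D^2$, that $\widehat\xi=\xi_\lambda$ outside $N$, and that near $\partial N$ both plane fields equal $\ker(dt)$ with common trivialization $\partial_x$ (Cartesian coordinates $x=r\cos\theta,\ y=r\sin\theta$ on the normal disk). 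First pick a section $s$ of $\xi_\lambda$ restricting to $\tau$ over $\Sigma$, transverse to zero, homotoped so that on $N$ it is the constant section $\partial_x$ --- possible since $\xi_\lambda|_N=\ker(dt)$ is the standard trivial plane field. Then let $\widehat s$ agree with $s$ outside $N$ and extend it transversally over $N$; the boundary data match since $\widehat\xi=\xi_\lambda$ near $\partial N$. Because $Z(s)\cap N=\emptyset$,
\[e(\widehat\xi,\tau)-e(\xi_\lambda,\tau)=\bigl[Z(\widehat s)\cap N\bigr]=e\bigl(\widehat\xi|_N,\ \partial_x\bigr)=n\,[\gamma]\in H_1(H_\lambda),\]
and since $\widehat\xi$ is independent of $t$, the integer $n$ is the relative Euler number of $\widehat\xi$ on a single normal-disk slice $D^2\times\{t_0\}$ with boundary framing $\partial_x$.

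\textbf{The local count.} For this I would use the radial field $s':=x\,\partial_x+y\,\partial_y$: since it is annihilated both by $dt$ and by $d\theta$, it is a section of $\ker(g\,dt+f\,d\theta)=\widehat\xi$ over all of $N$ (and also of the untwisted $\xi_\lambda|_N=\ker(dt)$), it vanishes exactly --- and transversally --- along the core $\gamma$, and on $\partial N$ it equals $2\epsilon\,\partial_r$. Two ingredients then fix $n$. (i) The index of $s'$ along $\gamma$ must be computed with respect to the orientation $\widehat\xi$ inherits by continuity from the $J$-orientation of $\xi_\lambda$; along the family $\{\widehat\xi(r,\theta_0)\}_{0\le r\le 2\epsilon}$ the plane performs a half-turn --- its oriented normal travels from one pole to the other of the $2$-sphere of oriented plane directions, as $g$ increases monotonically from $-1$ to $1$ and $f\ge 0$ vanishes at the endpoints --- so the orientation of $\widehat\xi$ at the core is \emph{opposite} to $(\partial_x,\partial_y)$, making the index of $s'$ there $-1$ (for the untwisted field the constant orientation makes it $+1$). (ii) Changing the boundary framing from $\partial_r$ (which winds $+1$ about $\partial D^2$ relative to $(\partial_x,\partial_y)$) to the required $\partial_x$ (winding $0$) shifts the relative Euler number by $-1$. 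So $n=-1+(-1)=-2$, and therefore $e(\xi_\lambda,\tau)-e(\widehat\xi,\tau)=2[\gamma]$.

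\textbf{Where the difficulty lies.} Conceptually the factor $2$ is $\chi(S^2)$: $\widehat\xi|_{D^2\times\{t_0\}}$ is the pullback of $TS^2$ along the Gauss map $p\mapsto\bigl(\text{oriented normal of }\widehat\xi_p\bigr)$, which collapses $\partial D^2$ to the north pole and sweeps out $S^2$ exactly once; pulling back $TS^2$ (Euler number $\chi(S^2)=2$) along a degree $\pm1$ map of $S^2$ yields $\pm2$. The real work will be the orientation bookkeeping in (i): verifying that the induced orientation of $\widehat\xi$ genuinely reverses along the core (equivalently, that the Gauss map has degree exactly $1$, not $0$), and then propagating the sign conventions --- the $J$-orientation of $\widehat\xi$, the orientation of $H_\lambda$, the sign in Lefschetz duality, and the handedness of the twist (which, by the earlier exercise, is immaterial) --- so that $n$ comes out $-2$ and the stated sign $+2[\gamma]$ is reproduced. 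The underlying topological content is simply that a full Lutz twist wraps the plane field once around the sphere of oriented $2$-plane directions.
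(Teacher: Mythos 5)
Your proof is correct and is essentially the paper's argument: both localize the computation to the twist neighborhood, use the radial field $r\,\partial_r$ (which remains tangent to $\ker(g\,dt+f\,d\theta)$), and extract the factor $2$ from the reversal of the plane's orientation along the core, the paper recording it as $+[\gamma]$ versus $-[\gamma]$ for the same framing while you record it as $0$ versus $-2[\gamma]$ after normalizing the boundary section to $\partial_x$. Your extra care with the boundary-framing correction and the Gauss-map/$\chi(S^2)$ heuristic is a nice elaboration, but it is the same underlying computation.
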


\begin{proof}
We can extend $\tau$ to a framing that is $\{\del_r, \del_{\theta}\}$ in a tubular neighborhood of $\gamma$.  This framing must vanish along $\gamma$ and so $e(\xi_{\lambda},\tau) = A + [\gamma]$ for some $A \in H_1(H_{\lambda})$.  However, after the Lutz twist, we can use the same framing, which still vanishes along $\gamma$, except with opposite sign.  Thus $e(\widehat{\xi_{\lambda}},\tau) = A - [\gamma]$.
\end{proof}

\subsection{Action of $H^2(X;\ZZ)$}

The set of $\SpinC$-structures on $X$ is an affine copy of $H^2(X;\ZZ)$.  This means that $H^2(X;\ZZ)$ acts freely and transitively on the set of $\SpinC$-structures.  That is, given a $\SpinC$-structure $\s$ and some nonzero $A \in H^2(X;ZZ)$, there is a distinct $\SpinC$-structure $\s' = \s + A$.  Furthermore, the $1^{\text{st}}$-Chern classes satisfy
\[c_1(\s + A) = c_1(\s) + 2A\]

To describe the action of $H^2(X;ZZ)$ on the set of $\SpinC$-structures, we use the interpretation of $H^2(X)$ from Complex \ref{eq:hom-complex}.  Recall that we have a complex
\[
\xymatrix{
H^1(\Sigma) \ar[r] & \bigoplus_{\lambda} H_1(H_{\lambda}) \ar[r] & \bigoplus_{\lambda} H_1(Z_{\lambda}) 
}
\]
whose homology group is $H_2(X;\ZZ) \cong H^2(X;\ZZ)$.  In particular, the homology consists of triples $(a,b,c) \in  \bigoplus_{\lambda} H_1(H_{\lambda})$ such that
\[ a -b = 0 \in H_1(Z_1) \qquad b- c = 0 \in H_1(Z_2) \qquad c - a = 0 \in H_1(Z_3)\]
modulo the image of $H_1(\Sigma)$.

In order to move from almost-complex structures to $\SpinC$-structures, we need the following facts.

\begin{lemma}
Let $X$ be a closed 4-manifold with handle decomposition.  Let $J$ be an almost-complex structure on the 2-skeleton $X_2$ and let $\xi$ be the field of $J$-tangencies along the boundary $Y_2 \coloneqq \del X_2$.  In particular, $\xi$ is the 2-plane field $TY_2 \cap J(TY_2)$.  Then $J$ extends across a 3-handle attached along a 2-sphere $S \subset Y_2$ if and only if $\langle e(\xi), [S] \rangle = 0$.
\end{lemma}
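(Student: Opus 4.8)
The plan is to treat the extension as a relative obstruction problem for the almost--complex structure over the core of the $3$--handle, and then to evaluate the single obstruction that arises. Write the handle as $h^3 \cong D^3 \times D^1$, attached to $X_2$ along $S^2 \times D^1 \subset Y_2$, with $S = S^2 \times \{0\}$. The pair $(h^3, S^2 \times D^1)$ deformation retracts onto $(D^3, S^2)$, so after enlarging $X_2$ by a collar of $Y_2$ and homotoping $J$ near the attaching region to be independent of the $D^1$-- and collar--directions (possible since those directions are contractible and $\pi_2(SO(4)) = 0$ makes all trivializations of $TX$ over $S^2 \times D^1$ homotopic), extending $J$ across $h^3$ becomes equivalent to extending the section $J|_S$ of the bundle of orientation--compatible almost--complex structures on $TX|_S$ over $D^3$, rel $\del D^3 = S$. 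Since $S$ bounds the core disk $D^3 \subset h^3 \subset X_2 \cup h^3$, over which the tangent bundle is trivial, $TX|_S$ is a trivial oriented real bundle; fixing a trivialization (unique up to homotopy, again as $\pi_2(SO(4)) = 0$) turns $J|_S$ into a well--defined homotopy class of maps $S \to Q$, where $Q \coloneqq SO(4)/U(2) \cong S^2$ is the fiber. Because $\pi_1(Q) = 0$, the only obstruction to extending over $D^3$ is the class
\[ o(J,S) \;=\; [\,J|_S\,] \;\in\; H^3\!\left(D^3, \del D^3; \pi_2(Q)\right) \;\cong\; \pi_2(S^2) \;=\; \ZZ , \]
and it vanishes iff the extension exists. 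So it suffices to show $o(J,S) = 0$ iff $\langle e(\xi),[S]\rangle = 0$.

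Next I would identify $e(\xi)$ with a Chern class. Choosing a $J$--invariant metric and letting $\nu$ be a unit normal field to $Y_2$ along $Y_2$, one has $J\nu \in TY_2$ and an orthogonal, $J$--invariant splitting $TX|_{Y_2} = \langle \nu, J\nu\rangle \oplus \xi$, in which $\langle \nu, J\nu\rangle$ is a $\CC$--line trivialized by $\nu$ and $\xi = TY_2 \cap J(TY_2)$ is the field of $J$--complex tangencies. Hence $c_1(TX,J)|_{Y_2} = c_1(\xi) = e(\xi)$, so $\langle e(\xi),[S]\rangle = \langle c_1(TX,J),[S]\rangle$, an integer that is automatically even since $TX|_S$ is a trivial real bundle over $S^2$.

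Finally I would evaluate $o(J,S)$. Over $Q$ there is a universal complex rank--$2$ bundle $\mathbb{E}$, namely the trivial $\RR^4$--bundle equipped with the tautological complex structure at each point of $Q$, and the chosen trivialization of $TX|_S$ identifies $(J|_S)^*\mathbb{E}$ with $(TX|_S, J)$ as complex bundles. Therefore
\[ \big(\deg J|_S\big)\cdot c_1(\mathbb{E}) \;=\; c_1\!\left(TX|_S, J\right) \;=\; \langle c_1(TX,J),[S]\rangle \;\in\; H^2(S^2) = \ZZ . \]
The one genuinely non--formal point is that $c_1(\mathbb{E}) \neq 0$: if it vanished, $\mathbb{E}$ would be trivial as a complex bundle over $S^2 = Q$, i.e. the principal $U(2)$--bundle $SO(4) \to SO(4)/U(2)$ would admit a section, forcing $\pi_1(SO(4)) \cong \pi_1(U(2)) = \ZZ$ and contradicting $\pi_1(SO(4)) = \ZZ/2$. (A model computation with $\RR^4 = \mathbb{H}$ and $Q = S^2 \subset \mathrm{Im}\,\mathbb{H}$ acting by right multiplication in fact gives $c_1(\mathbb{E}) = \pm 2$, consistent with the parity noted above.) Since $c_1(\mathbb{E}) \neq 0$, we get $o(J,S) = \deg(J|_S) = 0$ iff $\langle c_1(TX,J),[S]\rangle = 0$, which by the previous paragraph is $\langle e(\xi),[S]\rangle = 0$; this completes the argument.

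The routine but fiddly part is the bookkeeping in the first step --- arranging $J$ to be a product near the attaching region and checking that $o(J,S)$ is independent of the auxiliary choices of collar, trivialization, and homotopy --- whereas the one step that requires an actual (short) computation rather than formal manipulation is $c_1(\mathbb{E}) \neq 0$; I regard that as the crux. Alternatively, one may simply quote Gompf's handlebody construction of Stein surfaces, where precisely this obstruction is computed.
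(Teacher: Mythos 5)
Your proof is correct, but it takes a genuinely different route from the one in the notes. You run a direct obstruction-theory argument: reduce the extension over the $3$--handle to extending a map $S \to Q$, $Q = SO(4)/U(2)\cong S^2$, over the core $D^3$ rel boundary, identify the single obstruction in $\pi_2(Q)\cong \ZZ$ with the degree of $J|_S$, and convert that degree into $\tfrac{1}{2}\langle c_1(TX,J),[S]\rangle = \tfrac{1}{2}\langle e(\xi),[S]\rangle$ via the tautological bundle $\mathbb{E}\to Q$; the crux, as you say, is $c_1(\mathbb{E})=\pm 2\neq 0$, and your frame-bundle argument for nonvanishing (triviality of $\mathbb{E}$ would force $\pi_1(SO(4))\cong\ZZ$) is sound, as is the splitting $TX|_{Y_2}=\langle \nu,J\nu\rangle\oplus\xi$ giving $c_1(TX,J)|_{Y_2}=e(\xi)$. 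The paper instead argues contact-geometrically in the converse direction: since plane fields near the sphere $S$ are classified by the Euler number, the hypothesis $\langle e(\xi),[S]\rangle=0$ lets one homotope $\xi$ near $S$ to the standard negative tight contact structure, realize the homotopy of plane fields as an almost-complex structure on a collar $Y_2\times[0,1]$ (in the spirit of Proposition \ref{prop:J-xi}), and then cap off with the complex structure of the Stein filling. What each buys: your argument is self-contained, elementary, makes the parity (evenness of $\langle c_1,[S]\rangle$) explicit, and is essentially Gompf's original computation; the paper's version is shorter given the contact/Stein machinery already running through these notes and fits their theme, at the cost of outsourcing exactly the homotopy-classification fact that you prove by hand. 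Both handle the easy forward direction the same way, so there is no gap to report --- only the minor bookkeeping you already flag (independence of collar, trivialization, and homotopy choices), which is routine since $\pi_1(Q)=0$ and $\pi_2(SO(4))=0$.
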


\begin{proof}
One direction is obvious: if a 3-handle is attached along $S$ then $[S] = 0$ in $H_2(X;\ZZ)$.  Thus $\langle e(\xi),[S] \rangle = \langle c_1(J), [S] \rangle = 0$.  

Conversely, suppose that $\langle e(\xi), [S] \rangle = 0$.  There is a homotopy $\{\xi_t\}$ of 2-plane fields from $\xi = \xi_0$ to $\xi_1$ such that $\xi_1$ is the standard, {\it negative} tight contact structure in a neighborhood of $S$.  There is an almost-complex structure $J$ on $Y \times [0,1]$ whose restriction to $Y \times \{t\}$ is precisely $\xi_t$.   Finally, we can cap off with the Stein filling, which has a complex structure inducing $\xi_1$. 
\end{proof}

Choose a thickening of the spine and let $\{\widehat{Y}_{\lambda}\}$ denote its boundary components.  If $J$ is an almost-complex structure on the spine, let $\{\widehat{\xi}_{\lambda}\}$ denote the fields of $J$-complex tangencies.

\begin{corollary}
\label{cor:Spinc-trivial-euler}
An almost-complex structure $J$ on the spine of the trisection $\cT$ of $X$ is a $\SpinC$-structure if and only if the plane field $\widehat{\xi_{\lambda}}$ satisfies $e(\widehat{\xi}_{\lambda}) = 0$.
\end{corollary}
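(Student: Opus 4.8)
The plan is to deduce the corollary from the preceding Lemma together with the inside-out handle decomposition of Section~6.1. Recall Gompf's formulation: a $\SpinC$-structure on $X$ is a homotopy class of almost-complex structure defined over the $2$-skeleton of a handle decomposition of $X$ that extends, as an almost-complex structure, across the $3$-skeleton. So the first step is to identify, within the inside-out decomposition, exactly which handles the thickened spine already contains. The thickening $\nu(\Sigma)\cup\bigcup_\lambda\nu(H_\lambda)$ is built from the single $0$-handle, the $2g$ one-handles and the $1+3g$ two-handles (the latter accounting for $\nu(\Sigma)$ and the $g$ two-handles of each $\nu(H_\lambda)$), together with exactly three $3$-handles, namely the one dual $3$-handle coming from each $\nu(H_\lambda)$. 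In particular, any almost-complex structure $J$ on the spine restricts to an almost-complex structure on the $2$-skeleton $X_2$ and automatically extends across those three $3$-handles. Hence $J$ defines a $\SpinC$-structure if and only if it extends across the \emph{remaining} $3$-handles, i.e.\ the $k_\lambda$ dual $3$-handles of each sector $Z_\lambda$, which are attached along the $k_\lambda$ essential $2$-spheres of $\widehat Y_\lambda\cong \#_{k_\lambda}S^1\times S^2$.

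Next I would apply the preceding Lemma one $3$-handle at a time. For a $3$-handle attached along an essential sphere $S\subset\widehat Y_\lambda$, the Lemma gives that $J$ extends across it precisely when $\langle e(\widehat\xi_\lambda),[S]\rangle=0$. Since $\widehat Y_\lambda\cong \#_{k_\lambda}S^1\times S^2$, the classes of its $k_\lambda$ essential spheres form a basis of $H_2(\widehat Y_\lambda;\ZZ)\cong\ZZ^{k_\lambda}$, and because $H_1(\widehat Y_\lambda;\ZZ)$ is free the universal coefficient theorem identifies $H^2(\widehat Y_\lambda;\ZZ)$ with $\mathrm{Hom}(H_2(\widehat Y_\lambda;\ZZ),\ZZ)$. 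Therefore a class in $H^2(\widehat Y_\lambda;\ZZ)$ vanishes if and only if it pairs trivially with each essential-sphere class, so $J$ extends across all $k_\lambda$ sector $3$-handles attached to $\widehat Y_\lambda$ if and only if $e(\widehat\xi_\lambda)=0$. Running this over $\lambda=1,2,3$ and combining with the first step, $J$ extends across the full $3$-skeleton of $X$, equivalently $J$ is a $\SpinC$-structure, if and only if $e(\widehat\xi_\lambda)=0$ for every $\lambda$.

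The one point requiring care, and the main (mild) obstacle, is the iteration in the second step: attaching a $3$-handle along one essential sphere changes both the ambient boundary $3$-manifold and the relevant $2$-plane field, so the Lemma must be reapplied to the new boundary after each attachment. This causes no trouble because the essential spheres are disjointly embedded: after capping along some of them, the restriction of a suitably homotoped $\widehat\xi_\lambda$ to a collar of the next sphere is unchanged and the criterion $\langle e(\widehat\xi_\lambda),[S]\rangle=0$ is detected in the complement of the already-capped region; alternatively one simply observes that the $k_\lambda$ spheres can be made to bound disjoint $3$-handles simultaneously. A smaller point is that before capping one homotopes $\widehat\xi_\lambda$ to be standard near each sphere, but this is already internal to the proof of the Lemma. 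I would treat both of these as routine.
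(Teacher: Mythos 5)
Your proof is correct and takes essentially the approach the paper intends: the corollary is stated without a separate proof, being exactly the preceding lemma applied to the sector $3$-handles of the inside-out decomposition, which are attached along the essential spheres of $\widehat{Y}_\lambda \cong \#_{k_\lambda} S^1 \times S^2$, combined with the observation that $e(\widehat{\xi}_\lambda)$ vanishes if and only if it pairs trivially with those sphere classes (as $H^2(\#_{k_\lambda} S^1\times S^2;\ZZ)$ is free and dual to $H_2$). Your extra care about the $3$-handles being attached one at a time is fine but, as you note, routine since the attaching spheres are disjoint and the handles can be attached simultaneously.
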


We can now define the action of $H^2(X;\ZZ)$ on a $\SpinC$-structure $\s$.
\begin{enumerate}
\item We can view $\s$ as an almost-complex structure on the spine such that the Euler classes $e(\widehat{\xi}_{\lambda})$ all vanish.
\item Given $A \in H^2(X;\ZZ)$, represent its Poincare dual in $H_2(X;\ZZ)$ by a triple $(a,b,c)$.  We can represent each element $a,b,c$, by an embedded collection of curves $\{\gamma_{\lambda} \subset H_{\lambda}\}$.
\item Modify $J$ by a Lutz twist on every component of $\gamma_{\lambda}$ for $\lambda = 1,2,3$.
\end{enumerate}

{\bf Exercise}: Show that after the Lutz twists, we still have that $e(\widehat{\xi}_{\lambda}) = 0$ for each $\lambda = 1,2,3$.

Consequently, the resulting almost-complex structure also extends across the 3-handles and determines a $\SpinC$-structure.

\vfill
\pagebreak

%%%%%%%%%%%%%%%%%%%%%%%%%%%%%%%%%%%%%%%%%%%%%%%%%%%%%%%
\bibliographystyle{alpha}
\nocite{*}
\bibliography{References}

\begin{thebibliography}{FKSZ17}

\bibitem[ADK05]{ADK}
Denis Auroux, Simon~K. Donaldson, and Ludmil Katzarkov.
\newblock Singular {L}efschetz pencils.
\newblock {\em Geom. Topol.}, 9:1043--1114, 2005.

\bibitem[Bay08]{Baykur}
R.~\.{I}nan{c} Baykur.
\newblock Existence of broken {L}efschetz fibrations.
\newblock {\em Int. Math. Res. Not. IMRN}, pages Art. ID rnn 101, 15, 2008.

\bibitem[Don99]{Donaldson}
S.~K. Donaldson.
\newblock Lefschetz pencils on symplectic manifolds.
\newblock {\em J. Differential Geom.}, 53(2):205--236, 1999.

\bibitem[FKSZ17]{FKSZ}
Peter {Feller}, Michael {Klug}, Trent {Schirmer}, and Drew {Zemke}.
\newblock {Calculating the homology and intersection form of a 4-manifold from
  a trisection diagram}.
\newblock {\em arXiv e-prints}, page arXiv:1711.04762, November 2017.

\bibitem[FM19]{FM}
Vincent {Florens} and Delphine {Moussard}.
\newblock {Torsions and intersection forms of 4-manifolds from trisection
  diagrams}.
\newblock {\em arXiv e-prints}, page arXiv:1901.04734, January 2019.

\bibitem[LM18]{LM-complex}
Peter {Lambert-Cole} and Jeffrey {Meier}.
\newblock {Bridge trisections in rational surfaces}.
\newblock {\em arXiv e-prints}, page arXiv:1810.10450, October 2018.

\end{thebibliography}

%%%%%%%%%%%%%%%%%%%%%%%%%%%%%%%%%%%%%%%%%%%%%%%%%%%%%%%

\end{document}